\numberwithin{equation}{section}
\newtheoremstyle{note}
{1em}
{1em}
{}
{}
{\bfseries}
{:}
{.5em}
{}
\newtheorem{theorem}{Theorem}[section]
\newtheorem{lemma}[theorem]{Lemma}
\newtheorem{proposition}[theorem]{Proposition}
\newtheorem{corollary}[theorem]{Corollary}
\theoremstyle{note}
\newtheorem{remark}[theorem]{Remark}
\newtheorem{definition}[theorem]{Definition}
\newcommand{\Sph}{{\mathscr{S}}}
\newcommand{\Ball}{{\mathscr{B}}}
\newcommand{\N}{{\mathbb{N}}}
\newcommand{\R}{{\mathbb{R}}}
\newcommand{\C}{{\mathbb{C}}}
\newcommand{\n}[1]{ \left\|#1\right\| }
\DeclareMathOperator{\tr}{tr}
\DeclareMathOperator{\supp}{supp}
\DeclareMathOperator*{\argmin}{arg\,min}
\newcommand{\tn}[1]{{\left\vert\kern-0.25ex\left\vert\kern-0.25ex\left\vert #1 
    \right\vert\kern-0.25ex\right\vert\kern-0.25ex\right\vert}}
\title[Uniform homeomorphisms between spheres]{Uniform homeomorphisms between spheres of unitarily invariant ideals}
\author[J.A. Ch\'avez-Dom\'inguez]{Javier Alejandro Ch\'avez-Dom\'inguez}
\address{Department of Mathematics, University of Oklahoma, Norman, OK 73019-3103,
USA} \email{jachavezd@ou.edu}
\thanks{The author was partially supported by NSF grant DMS-1900985.}
\subjclass[2020]{Primary: 46B80, 46B20, 46L52
Secondary: 81P17, 81P45, 94A17  }
\keywords{Uniform homeomorphism, Banach space, Unit sphere, Unitarily invariant ideal, }
\begin{document}

\maketitle

\begin{abstract}
The classical Mazur map is a uniform homeomorphism between the unit spheres of $L_p$ spaces, and the version for noncommutative $L_p$ spaces has the same property.

Odell and Schlumprecht used two types of generalized Mazur maps to prove that the unit sphere of a Banach space $X$ with an unconditional basis is uniformly homeomorphic to the unit sphere of a Hilbert space if and only if $X$ does not contain $\ell_\infty^n$'s uniformly. We prove a noncommutative version of this result, yielding uniform homeomorphisms between spheres of unitarily invariant ideals, and along the way we study noncommutative versions of the aforementioned generalized Mazur maps: one based on the $p$-convexification procedure, and one based on the minimization of quantum relative entropy.
The main result provides new examples of Banach spaces whose unit spheres are uniformly homeomorphic to the unit sphere of a Hilbert space (in fact, spaces with the  property (H) of Kasparov and Yu).
\end{abstract}

\section{Introduction}

The nonlinear theory of Banach spaces studies various notions of equivalence between Banach spaces.
On one end, if we consider only the topology everything collapses: any two separable infinite-dimensional Banach spaces are homeomorphic \cite{Kadets}.
On the other end, if we consider uniform homeomorphisms, things can collapse in the completely opposite way:
for $1<p<\infty$, if a Banach space $X$ is uniformly homeomorphic to $\ell_p$, then $X$ is linearly isomorphic to $\ell_p$ \cite{JLS}.
Considering uniform homeomorphisms between spheres gives a more flexible notion of equivalence between these two extremes, with the first example going back to Mazur \cite{Mazur}: for $p,q \in [1,\infty)$,  the map $f \mapsto \mathrm{sign}(f) |f|^{p/q}$ is a uniform homeomorphism between the spheres of $L_p(\mu)$ and $L_q(\mu)$.
It has been known for a while that the analogous result holds for noncommutative $L_p$ spaces (see e.g. \cite{Ozawa,Raynaud})  although precise estimates for the moduli of uniform continuity are rather recent \cite{Ricard}.
The main goal of this paper is to study other generalizations of the Mazur map, 
specifically noncommutative versions of two generalized Mazur maps used by Odell and Schlumprecht \cite{Odell-Schlumprecht} to characterize the unconditional sequence spaces whose unit spheres are uniformly homeomorphic to that of a Hilbert space: those which have finite cotype (that is, they do not contain a sequence of subspaces uniformly linearly isomorphic to the spaces $\ell_\infty^n$).

Let us now clarify what we mean by ``noncommutative''.
An oversimplified way of thinking about what changes mathematically when going from the commutative to the noncommutative points of view, is that vectors are replaced by matrices. Moreover, we can identify the classical vector case with the subspace of diagonal matrices. Since diagonal matrices commute but general ones do not, the term ``noncommutative'' is often used to describe the matrix version of a classical space.
The easiest example relates the $\ell_p$ norm on $\R^n$ or $\C^n$ with the Schatten $p$-norm for $n\times n$ matrices, as the latter evaluated on a diagonal matrix is precisely the former evaluated on the vector of its diagonal entries.  One can analogously define noncommutative versions of other sequence spaces as follows.
For a compact operator $A$ acting on $\ell_2$,  we denote by  $s(A) = \big(s_j(A)\big)_{j=1}^\infty$  the sequence of singular values of $A$ (i.e.  the eigenvalues of $|A|$).
If $E$ is a sequence space,  one can define a Schatten-like family of operators $S_E = \big\{ A : \ell_2 \to \ell_2 \text{ compact} : s(A) \in E \big\}$
and $\n{A}_{S_E} = \n{ s(A) }_E$; this will be the ``noncommutative'' version of the sequence space $E$. Under a technical condition on $E$ (namely  being $1$-symmetric, see Section \ref{sec-preliminaries} for more details), the associated family $S_E$ is an \emph{unitarily invariant ideal of operators} with norm given by $\|\cdot\|_{S_E}$,  where unitarily invariant of course means that $\|UAV\|_{S_E} = \|A\|_{S_E}$ whenever $U,V$ are unitaries on $\ell_2$.
These are the unitarily invariant ideals mentioned in the title, and we will be interested in uniform homeomorphisms between their spheres.

Let us point out that it is important, and typically not trivial, to understand which nice geometric properties of a sequence space $E$ are inherited by its noncommutative sibling $S_E$. While this is well understood for \emph{linear} properties such as uniform convexity/smoothness or cotype \cite{Arazy,Garling-TJ} and has been thoroughly studied since at least the early 1980's (see the survey \cite{Kaminska-Czerwinska}), there is a large gap in our knowledge of what happens for the \emph{nonlinear} geometric properties of Banach spaces that have been well-studied in the commutative setting for the past couple of decades.
The present paper is a small step in this wide open direction.

The first of the generalized Mazur maps used by Odell and Schlumprecht is given by the $p$-convexification procedure, which generalizes the fact that a sequence $(x_n)$  is in $\ell_p$ if and only if $(|x_n|^p) \in \ell_1$ and moreover  $\|(x_n)\|_p = \| (|x_n|^p) \|_1^{1/p}$.
Similarly, for a sequence space $E$ we can define its $p$-convexi\-fication $E^{(p)}$ as the set of sequences $(x_n)$ such that $(|x_n|^p) \in E$, with norm $\|(x_n)\|_{E^{(p)}} = \| (|x_n|^p) \|_E^{1/p}$.  The $p$-convexification generalized Mazur map $E^{(p)} \to E$ is then given by $(x_n) \mapsto \big( \mathrm{sign}(x_n)|x_n|^p\big) $.
Odell and Schlumprecht \cite[Prop. 2.8]{Odell-Schlumprecht} proved that for any $1\le p < \infty$ and any unconditional sequence space $E$ this map yields a uniform homeomorphism between the spheres of $E^{(p)}$ and $E$, and moreover the moduli of uniform continuity of this map and its inverse depend only on $p$.
A corresponding $p$-convexification map was already available for noncommutative sequence spaces, and we show in Section \ref{sec-convexification} that it is also a uniform homeomorphism between spheres by adapting the techniques of \cite{Ricard} (see Theorem \ref{thm-homeomorphism-convexification}). In contrast with the classical result of Odell and Schlumprecht, ours is restricted to the range $3 \le p < \infty$.

The second of the generalized Mazur maps of Odell and Schlumprecht is an entropy-minimization one. While they did not use this language, the entropy that they minimize is in fact (a translation of) the relative entropy from Information Theory.
This realization inspired us to use the notion of quantum relative entropy from Quantum Information Theory to define a noncommutative version of the entropy-minimization generalized Mazur map from \cite{Odell-Schlumprecht}. This is done in Section \ref{sec-entropy}, where we use it to obtain some other uniform homeomorphisms.

In Section \ref{sec-main} we prove the main result, a noncommutative version of the Odell--Schlumprecht theorem: if $E$ is a 1-symmetric sequence space, then the unit sphere of $S_E$ is uniformly homeomorphic to the unit sphere of a Hilbert space if and only if $S_E$ does not contain $\ell_\infty^n$'s uniformly.
It should be noted that we have not been able to fully prove this result by following the entropy minimization route, so we use complex interpolation arguments in the style of Daher \cite{Daher} instead.
We then note that the aforementioned spaces $S_E$ in fact satisfy a finer condition, namely the Property (H) of Kasparov and Yu.

\section{Notation and preliminaries}\label{sec-preliminaries}

Throughout this paper we consider only complex scalars.
We denote by $M_{k,n}$ the space of all $k$ by $n$ matrices with complex entries, 
and set $M_n = M_{n,n}$. 
We denote by $\tr(Z)$ the 
trace of a matrix $Z\in M_n$. 
We write $I_n$ (or simply~$I$) for the identity matrix in $M_n$.
The operator norm on $M_n$ will be denoted by $\n{\cdot}_\infty$,
and the trace norm by $\n{\cdot}_1$.
We denote by $M_n^+$ the subset of positive semidefinite matrices.

\subsection{Convexity and smoothness in Banach spaces}
The {unit sphere} of a Banach space $X$ will be denoted by $\Sph(X)$,
and its {closed unit ball} by $\Ball(X)$.
A Banach space $X$ is said to be \emph{strictly convex} if whenever $x,y\in\Sph(X)$ are distinct we have $\n{\tfrac{1}{2}(x+y)} < 1$.
The space $X$ is called \emph{uniformly convex} when the above condition holds uniformly, that is, for every $\varepsilon\in(0,2)$ the \emph{modulus of uniform convexity}
\[
\delta_X(\varepsilon) = \inf\left\{ 1-\n{\frac{x+y}{2}} \;:\; \|x\|=\|y\|=1, \|x-y\| \ge\varepsilon \right\}
\]
is strictly positive.
A Banach space $X$ is said to be \emph{smooth} if for every nonzero vector $x\in X$ there exists a unique {norming functional}, that is, $x^* \in\Sph(X^*)$ such that $x^*(x)=\n{x}$.
For a smooth Banach space $X$, ${J_X : X \to X^*}$ denotes the \emph{duality map}: $J_X(x)$ is the unique functional $x^* \in X^*$ such that $\n{x^*}=\n{x}$ and $x^*(x) = \n{x}^2$.
Sometimes we write only $J$ instead of $J_X$ if the space $X$ is clear from the context.
There is of course a notion of being \emph{uniformly smooth} with an accompanying modulus, but we will not need the precise details in this paper. We refer the reader to \cite[Appendix A]{Benyamini-Lindenstrauss} for a thorough account of the basics in this subject.

\subsection{Unconditional and symmetric bases}
A sequence $(x_n)_{n=1}^\infty$ in a Banach space $E$ is a \emph{(Schauder) basis} if every $x\in X$ admits a unique expansion as a convergent series $x=\sum_{n=1}^\infty a_nx_n$ for scalars $a_n$.
A Schauder basis is said to be \emph{unconditional} if the convergence of the series expansions is unconditional, that is, whenever $x=\sum_{n=1}^\infty a_nx_n$ and $(\lambda_n)$ is a bounded sequence of scalars, the series $\sum_{n=1}^\infty \lambda_na_nx_n$ converges. If additionally we have $\n{\sum_{n=1}^\infty \lambda_na_nx_n} \le \n{\sum_{n=1}^\infty a_nx_n}$ whenever $|\lambda_n| \le 1$, the basis is said to be \emph{1-unconditional}.
A Schauder basis is called \emph{symmetric} if for every $x=\sum_{n=1}^\infty a_nx_n$, all the series $\sum_{n=1}^\infty \lambda_na_nx_{\pi(n)}$ converge whenever $(\lambda_n)$ is a bounded sequence of scalars and $\pi : \N\to\N$ is a permutation.
If additionally we have $\n{\sum_{n=1}^\infty \lambda_na_nx_{\pi(n)}} \le \n{\sum_{n=1}^\infty a_nx_n}$ whenever $|\lambda_n| \le 1$, the basis is said to be \emph{1-symmetric}. Note that when the basis is 1-symmetric, the norm is permutation invariant in the sense that
$\n{\sum_{n=1}^\infty a_nx_{\pi(n)}} = \n{\sum_{n=1}^\infty a_nx_{n}}$.
The classical book \cite{Lindenstrauss-Tzafriri-I} is an excellent reference for the theory of Schauder bases.

Note that a space $E$ with a Schauder basis can be interpreted as a space of sequences, since every $x \in E$ corresponds to a (unique) sequence of coefficients in its series expansion with respect to the basis.
In this paper, we will identify a space with a Schauder basis with the corresponding space of sequences. Therefore, instead of talking about spaces with a 1-symmetric basis, we will talk about 1-symmetric sequence spaces (and similarly for the other notions).

As usual, when $1 \le p <\infty$ we denote by $E^{(p)}$ the $p$-convexification of a $1$-unconditional sequence space, that is, $E^{(p)}$ the space of sequences $a=(a_n)$ such that $|a|^p \in E$ (where the absolute value is understood coordinatewise) with norm $\n{a}_{E^{(p)}} = \n{|a|^p}_E^{1/p}$. The space $E^{(p)}$ is also a Banach $1$-unconditional sequence space, see \cite{Lindenstrauss-Tzafriri-II} for more on the $p$-convexification procedure on general Banach lattices.

\subsection{Unitarily invariant ideals}
Recall from the introduction that given a Banach sequence space $E$, $S_E$ is the set of compact operators $A : \ell_2 \to \ell_2$ whose sequence of singular values $s(A)$ belongs to $E$, and we denote  $\n{A}_{S_E} = \n{ s(A) }_E$.
If $E$ is a $1$-symmetric Banach sequence space, then $S_E$ is a unitarily invariant ideal of operators.
When $E=\ell_p$, we abbreviate $S_p := S_{\ell_p}$.
When we talk about unitarily invariant ideals, we mean ideals of the form $S_E$ as above; in particular we consider only ideals consisting of compact operators, so $\mathcal{B}(\ell_2)$ will not be included.
Standard references for unitarily invariant ideals are \cite{Gohberg-Krein,Simon}. These ideals could of course be defined over Hilbert spaces larger than $\ell_2$, but such extra generality would only complicate the notation so we stay in the separable case for simplicity.
We will, however, make heavy use of the finite-dimensional versions: $S_E^n$ denotes the space $M_n$ with the norm $\n{\cdot}_{S_E}$, and when $E = \ell_p$ we abbreviate $S_p^n := S_{\ell_p}^n$.
Since $S_E$ consists of bounded linear operators on $\ell_2$, we can interpret its elements as infinite matrices. By considering initial blocks, we have a standard embedding of $S_E^n$ into $S_E$. Many of our arguments in $S_E$ will be based on a reduction to the finite-dimensional case, and this is possible because of the following result (which should be well-known).

\begin{proposition}\label{prop-approximations-in-S_E}
Let $E$ be a $1$-symmetric sequence space. For any $A \in S_E$ we have $\lim_{m\to\infty}\n{A-P_mAP_m}_{S_E} = 0$ where $P_m : \ell_2 \to \ell_2$ is the orthogonal projection onto the span of the first $m$ elements of the canonical basis.
In particular, $\bigcup_{n=1}^\infty S_E^n$ is dense in $S_E$.
\end{proposition}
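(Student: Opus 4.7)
The plan is to combine a singular value decomposition with a finite-rank truncation and a finite-dimensional compression, bridging the two by the fact that coordinate projections act as contractions on $S_E$.

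First I would invoke the singular value decomposition to write $A = \sum_{j=1}^\infty s_j(A)\,|u_j\rangle\langle v_j|$ with orthonormal systems $(u_j),(v_j)$. Setting $A_N = \sum_{j=1}^N s_j(A)\,|u_j\rangle\langle v_j|$, the singular values of $A-A_N$ arranged in decreasing order form the sequence $(s_{N+1}(A), s_{N+2}(A),\ldots)$. By $1$-symmetry of $E$, the norm is invariant under rearrangement, so this equals $\bigl\|(0,\ldots,0,s_{N+1}(A),s_{N+2}(A),\ldots)\bigr\|_E$ (with $N$ leading zeros). Since $(s_j(A))_{j=1}^\infty \in E$ and the canonical vectors form a Schauder basis of $E$, the tails converge to zero in $E$, giving $\n{A - A_N}_{S_E} \to 0$.

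Next, for $A_N$ fixed (of rank at most $N$), I would show $\n{A_N - P_m A_N P_m}_{S_E} \to 0$ as $m\to\infty$. Since $A_N$ is compact, $P_m A_N P_m \to A_N$ in operator norm by a standard argument. Moreover, $A_N - P_m A_N P_m$ has rank at most $2N$, so only its first $2N$ singular values are nonzero, and each is bounded by $\n{A_N - P_m A_N P_m}_\infty$. By $1$-unconditionality of $E$ this yields the estimate
\[
\n{A_N - P_m A_N P_m}_{S_E} \le \n{A_N - P_m A_N P_m}_\infty \cdot \Bigl\|\sum_{j=1}^{2N} e_j\Bigr\|_E,
\]
and the right-hand side tends to $0$ as $m\to\infty$.

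Finally I would combine the two estimates: given $\varepsilon>0$ pick $N$ so that $\n{A-A_N}_{S_E}<\varepsilon/3$, and then $m$ large so that $\n{A_N - P_m A_N P_m}_{S_E}<\varepsilon/3$. Because $\n{P_m}_\infty=1$, unitary invariance together with the ideal property gives $\n{P_m B P_m}_{S_E}\le \n{B}_{S_E}$ for every $B\in S_E$, so $\n{P_m(A-A_N)P_m}_{S_E}<\varepsilon/3$. The triangle inequality then yields $\n{A-P_m A P_m}_{S_E}<\varepsilon$. Since $P_m A P_m$ is supported on the first $m$ basis vectors and hence lies in $S_E^m$, this also proves that $\bigcup_n S_E^n$ is dense in $S_E$. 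The only point that requires care rather than routine computation is the tail convergence in step one; everything else is a soft combination of the Schauder basis property, rearrangement invariance, and the ideal property.
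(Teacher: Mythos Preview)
Your proof is correct and follows essentially the same three-term triangle inequality strategy as the paper: approximate $A$ by a finite-rank SVD truncation $A_N$, control $\n{A_N-P_mA_NP_m}_{S_E}$ for fixed $N$, and use the ideal property to handle $\n{P_m(A-A_N)P_m}_{S_E}$. The only difference is expository: the paper cites \cite[Thm.~2.7(b)]{Simon} for the convergence $A_N\to A$ in $S_E$ and simply declares the middle term ``clear because $A_{n_0}$ is a fixed finite-rank operator,'' whereas you spell out both steps (tail convergence via the Schauder basis property of $E$ plus permutation invariance, and the rank-$2N$ bound combined with operator-norm convergence), making your version more self-contained.
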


\begin{proof}
Let $A \in S_E$, and fix $\varepsilon>0$. Notice that the canonical approximations $(A_n)_{n=1}^\infty$ given by the singular value decomposition converge to $A$ in $S_E$ \cite[Thm. 2.7.(b)]{Simon}, so we can choose $n_0\in\N$ satisfying $\n{A-A_{n_0}}_{S_E} < \varepsilon$.
Now for any $m\in\N$,
\begin{multline*}
  \n{A-P_mAP_m}_{S_E} \le  \\
  \n{A-A_{n_0}}
_{S_E} + \n{A_{n_0} - P_mA_{n_0}P_m}_{S_E} + \n{P_m(A_{n_0} - A)P_m}_{S_E}.  
\end{multline*}
The first term is smaller than $\varepsilon$
 by assumption, and so is the third by the ideal property. We just need to make sure that the middle term is less than $\varepsilon$ for $m$ large enough, which is clear because $A_{n_0}$ is a fixed finite-rank operator.
 \end{proof}

The following result about complex interpolation of unitarily invariant ideals is folklore: for example, it is used without comment in the proof of \cite[Prop. 2]{TJ-uc}. We point out that one can prove it in a straightforward manner by using \cite[Thm. 2.10]{Simon}.

\begin{proposition}\label{prop:interpolation-unitarily-invariant-ideals}
Let $E_0,E_1$ be an interpolation pair of 1-symmetric sequence spaces. Then for any $\theta \in[0,1]$ we have $(S_{E_0},S_{E_1})_\theta = S_{(E_0,E_1)_\theta}$ isometrically.
\end{proposition}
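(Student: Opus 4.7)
The plan is to check both isometric inclusions of $(S_{E_0},S_{E_1})_\theta=S_{(E_0,E_1)_\theta}$ directly from the Calder\'on complex method, using that the norm on each $S_{E_k}$ depends only on singular values---via $1$-symmetry of $E_k$---to transport admissible analytic functions between the sequence and operator levels. Throughout I fix a singular value decomposition $A=\sum_j s_j(A)\,u_j\otimes v_j^*$ of the distinguished element, with orthonormal systems $(u_j),(v_j)\subset\ell_2$; this SVD is the common bridge used in both directions.

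For the inclusion $S_{(E_0,E_1)_\theta}\subseteq(S_{E_0},S_{E_1})_\theta$ I use the ``diagonal embedding'' $\iota(x)=\sum_j x_j\,u_j\otimes v_j^*$. Since $\iota(x)$ has singular values equal to $(|x_j|)_j$ up to permutation, $1$-symmetry of each $E_k$ gives $\|\iota(x)\|_{S_{E_k}}=\|x\|_{E_k}$, so $\iota$ restricts to an isometric embedding $E_k\hookrightarrow S_{E_k}$ and is in particular a bounded linear map $E_0+E_1\to S_{E_0}+S_{E_1}$. Given $\varepsilon>0$ and an admissible function $f:\overline S\to E_0+E_1$ with $f(\theta)=s(A)$ that nearly realizes the interpolation norm of $s(A)$, the composition $F=\iota\circ f$ is analytic and admissible with $F(\theta)=A$ and boundary norms $\|F(z)\|_{S_{E_k}}=\|f(z)\|_{E_k}$; letting $\varepsilon\to 0$ then gives $\|A\|_{(S_{E_0},S_{E_1})_\theta}\le\|A\|_{S_{(E_0,E_1)_\theta}}$.

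For the reverse inclusion I use a dual construction: given an admissible analytic $F:\overline S\to S_{E_0}+S_{E_1}$ with $F(\theta)=A$, let $U,V:\ell_2\to\ell_2$ be the isometries sending the canonical basis to $(u_j),(v_j)$ and set
\[
g(z)=\bigl(\langle F(z)v_j,u_j\rangle\bigr)_{j\ge 1}=\operatorname{diag}\bigl(U^*F(z)V\bigr),
\]
so that $g(\theta)=s(A)$. The ideal property bounds $\|U^*F(z)V\|_{S_{E_k}}\le\|F(z)\|_{S_{E_k}}$, and the pinching principle---the content of \cite[Thm.~2.10]{Simon}, which in finite dimensions amounts to writing the diagonal map as the average of conjugations by the sign-diagonal unitaries $\operatorname{diag}(\pm 1,\ldots,\pm 1)$---gives $\|g(z)\|_{E_k}=\|\operatorname{diag}(U^*F(z)V)\|_{S_{E_k}}\le\|U^*F(z)V\|_{S_{E_k}}$. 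Since $g$ is the composition of the analytic $F$ with a bounded linear pinching map into $E_0+E_1$, it is an admissible function witnessing $\|s(A)\|_{(E_0,E_1)_\theta}\le\|A\|_{(S_{E_0},S_{E_1})_\theta}$.

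The one technical step deserving care is upgrading the pinching inequality from the finite-dimensional averaging argument to the infinite-dimensional setting. The natural way is to first establish it for the truncations $P_nF(z)P_n$, for which the sign-diagonal averaging is literally valid, and then let $n\to\infty$, using Proposition~\ref{prop-approximations-in-S_E} together with the monotonicity of $1$-symmetric norms on initial segments. All other pieces---isometricity of $\iota$, the identity $g(\theta)=s(A)$, and preservation of analyticity under composition with bounded linear maps---are immediate.
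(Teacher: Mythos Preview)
Your proposal is correct and is precisely the ``straightforward'' argument the paper alludes to: the paper does not spell out a proof at all, merely remarking that the result is folklore and can be obtained using \cite[Thm.~2.10]{Simon}, which is exactly the pinching inequality you invoke for the harder inclusion. Your write-up simply fills in the details of that hint, so the approaches coincide.
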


\section{Unitarily invariant matrix norms}

By a unitarily invariant norm on $M_n$ we mean a norm $\tn{\cdot}$ such that for any $A,U,V \in M_n$ with $U,V$ unitaries we have $\tn{A} = \tn{UAV}$.
Sometimes we call these \emph{unitarily invariant matrix norms} to emphasize the distinction between them and the them the unitarily invariant norms on infinite-dimensional spaces from the previous section.

Unitarily invariant matrix norms are a well-studied subject, we refer the reader to e.g. \cite[Sec. IV.2]{Bhatia} for details. In the rest of this section we will record some properties that we will need in this paper.

\begin{proposition}\label{prop-unitarily-invariant-matrix-norm-basics}
\begin{enumerate}[(a)]
\item A norm $\tn{\cdot}$ on $M_n$ is unitarily invariant if and only if it satisfies the ideal property, that is, for any $A,B,C\in M_n$ we have $\tn{ABC} \le \n{A}_\infty \, \tn{B} \, \n{C}_\infty$ \cite[Prop. IV.2.4]{Bhatia}.
\item  Unitarily invariant norms on $M_n$ are in bijective correspondance with $1$-symmetric norms on $\R^n$ (via the finite-dimensional version of the procedure $E \mapsto S_E$ from the previous section) \cite[Thm. IV.2.1]{Bhatia}.
\item The dual of a unitarily invariant norm with respect to trace duality is also unitarily invariant, and their associated sequence spaces are dual to each other \cite[Prop. IV.2.11]{Bhatia}.
\item A unitarily invariant norm on $M_n$ is smooth if and only it its associated sequence space is smooth \cite{Arazy}.
\end{enumerate}
\end{proposition}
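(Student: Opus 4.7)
The statement is a compendium of four standard facts, each with a reference attached, so my plan is not to reprove them from scratch but to sketch the unified route I would take to establish them — using the singular value decomposition (SVD) and Ky Fan's majorization as the common engine.

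For (a), the forward implication is the substantive half. I would use the finite-dimensional Russo--Dye trick: writing a contraction $A$ via its SVD $A = U\Sigma V$ with $0 \le \Sigma \le I$, and decomposing $\Sigma = \tfrac{1}{2}\bigl((\Sigma + i\sqrt{I-\Sigma^2}) + (\Sigma - i\sqrt{I-\Sigma^2})\bigr)$, expresses $A$ as a convex combination of unitaries. Convexity of $\tn{\cdot}$ and unitary invariance then yield $\tn{AB} \le \|A\|_\infty \tn{B}$, and applying this on both sides gives the ideal inequality. The reverse implication is immediate from $\tn{UBV}\le\tn{B}$ together with $\tn{B}\le\tn{U^*(UBV)V^*}\le\tn{UBV}$.

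For (b), restriction to diagonal matrices sends a unitarily invariant matrix norm to a norm on $\C^n$; invariance under the unitaries given by permutation matrices and by diagonal phase matrices produces $1$-symmetry. Conversely, given a $1$-symmetric norm $\|\cdot\|_E$, define $\tn{A} = \|s(A)\|_E$. Positivity, homogeneity and unitary invariance are clear; the triangle inequality is the one nontrivial point, and for it I would invoke the Ky Fan dominance theorem: the partial sums of singular values satisfy $\sum_{j\le k} s_j(A+B) \le \sum_{j\le k}\bigl(s_j(A)+s_j(B)\bigr)$, and any $1$-symmetric norm is monotone with respect to this weak majorization. For (c), computing the trace dual $\tn{A}^* = \sup_{\tn{B}\le 1}|\tr(AB)|$ and reducing via the SVD together with von Neumann's trace inequality $|\tr(AB)| \le \sum_j s_j(A)s_j(B)$ identifies $\tn{\cdot}^*$ with the unitarily invariant norm associated to the sequence-space dual $E^*$; unitary invariance of the dual norm is automatic because unitaries act isometrically on both sides of the trace pairing.

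For (d), a norming functional of $A$ in the trace pairing is, by (c), a matrix $B$ with $\tn{B}^*=1$ and $\tr(AB) = \tn{A}$. Writing $A = U\Sigma V$ via SVD and comparing with the equality case of von Neumann's inequality, any such $B$ must have the form $B = V^* D U^*$ where $D$ is diagonal with $\|D\|_{E^*}=1$ and $\sum_j D_{jj} s_j(A) = \|s(A)\|_E$; that is, $D$ is a norming functional for $s(A)$ in $E$. The unitary part is therefore determined by $A$ (up to the ambiguity already present in the SVD of $A$), so uniqueness of the matrix norming functional reduces cleanly to uniqueness of the sequence-space norming functional, i.e.\ to smoothness of $E$.

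The main obstacle is really concentrated in (b) and, to a lesser extent, in (d): the triangle inequality through Ky Fan majorization is the one place where a genuine matrix inequality — as opposed to bookkeeping with SVD and trace duality — has to be used, and the passage from sequence-space smoothness to matrix smoothness in (d) requires pinning down the unitary ``gauge freedom'' in the norming functional carefully. Because each item is cited to a standard source, my exposition would be a couple of lines per part rather than a full derivation.
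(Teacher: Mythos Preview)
The paper does not prove this proposition at all: each item simply carries a citation (to Bhatia or Arazy), and the proposition is stated without a proof environment. So there is no ``paper's own proof'' to compare against; your proposal already goes well beyond what the paper provides.

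Your sketches are essentially correct and follow the standard routes one finds in the cited sources. For (a), the Russo--Dye decomposition of a contraction as an average of two unitaries is exactly the clean argument; for (b) and (c), Ky Fan majorization and von Neumann's trace inequality are indeed the right engines. One point deserves a bit more care than you indicate: in (d), when $A$ has repeated or zero singular values the SVD is not unique, and your claim that ``the unitary part is determined by $A$'' is not literally true. What saves the argument is that if $E$ is smooth and $s(A)$ has a repeated value, the symmetry of $E$ forces the norming functional $D$ to be constant on that block, so $V^*DU^*$ is independent of the SVD choice; and on the kernel of $A$ the norming functional must vanish for the same reason. You flag this ``gauge freedom'' as needing care, which is right, but the resolution is this symmetry argument rather than anything about the SVD itself. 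The converse direction ($S_E$ smooth $\Rightarrow$ $E$ smooth) is immediate by restricting to diagonals, as you implicitly use.
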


We will need lemmas related to the last statement in the previous proposition, which give extra information about the duality map. These are not difficult and are surely known to experts, but we have not found them explicitly stated in the literature so we prove them for completeness.

\begin{lemma}\label{lemma-G-gives-states}
If $X=(M_n, \n{\cdot})$ is an unitarily invariant smooth norm and $\sigma \in \Sph(X)^+$, then $J_X(\sigma) \in \Sph(X^*)^+$ and $\sigma J_X(\sigma) = J_X(\sigma) \sigma \in \Sph(S_1^n)^+$.
\end{lemma}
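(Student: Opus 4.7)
The plan is to reduce the problem to the associated commutative sequence space via the spectral decomposition of $\sigma$, and then exploit uniqueness of the norming functional. Write $\sigma = UDU^*$ with $U$ unitary and $D=\mathrm{diag}(d_1,\dots,d_n)$, $d_i\ge 0$, and let $E$ be the $1$-symmetric sequence norm on $\C^n$ corresponding to $\n{\cdot}$ via Proposition~\ref{prop-unitarily-invariant-matrix-norm-basics}(b), so $\n{d}_E=\n{\sigma}_X=1$. By Proposition~\ref{prop-unitarily-invariant-matrix-norm-basics}(d), $E$ is smooth, so the norming vector $f:=J_E(d)$ is well-defined, with $\n{f}_{E^*}=1$ and $\pair{f}{d}=1$.

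The first substantive step is to verify that $f$ has nonneg real entries. Consider $f':=(|f_i|)_{i=1}^n$. Since $E$ (and hence $E^*$) is $1$-symmetric, in the complex setting this includes invariance under unimodular multipliers, so $\n{f'}_{E^*}=\n{f}_{E^*}=1$. On the other hand, since $d_i\ge 0$, one has $\pair{d}{f'}=\sum d_i|f_i|\ge \bigl|\sum d_if_i\bigr| = 1$, while H\"older gives $\pair{d}{f'}\le 1$, so equality holds. Thus $f'$ is also a norming functional for $d$, and smoothness of $E$ forces $f=f'$; that is, $f_i\ge 0$ for every $i$.

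Now set $G:=U\,\mathrm{diag}(f)\,U^*$. This is positive semidefinite, and since its singular values are just the nonneg entries of $f$, we have $\n{G}_{X^*}=\n{f}_{E^*}=1$ using Proposition~\ref{prop-unitarily-invariant-matrix-norm-basics}(c). Moreover $\tr(G\sigma)=\tr(\mathrm{diag}(f)\,D)=\pair{f}{d}=1=\n{\sigma}_X^2$, so $G$ is a norming functional for $\sigma$ of the correct norm. By smoothness of $X$ such a functional is unique, so $J_X(\sigma)=G$, giving $J_X(\sigma)\in\Sph(X^*)^+$. Finally, because $D$ and $\mathrm{diag}(f)$ are both diagonal they commute, so $\sigma\,J_X(\sigma)=J_X(\sigma)\,\sigma=U\,\mathrm{diag}(d_if_i)\,U^*$; this is positive semidefinite and its trace norm equals its trace $\sum d_if_i=1$, placing it in $\Sph(S_1^n)^+$.

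The main technical point is the positivity of the diagonal duality map $J_E$, which relies on the complex $1$-symmetry together with smoothness; once that is in hand, the matrix statement is a straightforward diagonal computation enabled by the spectral decomposition.
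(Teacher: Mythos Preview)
Your proof is correct and follows essentially the same route as the paper's: diagonalize $\sigma$, work with the duality map $J_E$ on the associated $1$-symmetric sequence space, argue that $J_E(d)$ has nonnegative entries, and then invoke smoothness of $X$ to identify $J_X(\sigma)$ with the back-transformed diagonal matrix. The only notable difference is that the paper first reduces to invertible $\sigma$ (so that a strictly negative entry of $J_E(s)$ would yield a strict improvement upon taking absolute values), whereas your argument handles the general case directly by showing that $(|f_i|)$ is also a norming functional and appealing to uniqueness---this is slightly cleaner and spares the approximation step.
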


\begin{proof}
We will assume that $\sigma$ is invertible, the general case then follows by approximation.
Since $\sigma \ge 0$, there exists an orthonormal basis of $\C^n$ so that the matrix representation of $\sigma$ with respect to this basis is the diagonal matrix corresponding to the vector $s \in \R^n$ of the singular values of $\sigma$. Note that by our assumption, all entries of $s$ are strictly positive. Moreover, if $E$ is the sequence space associated to $X$, $\n{\sigma}_X = \n{s}_E = 1$.
Note that the vector $J_E(s) \in E^*$ must have nonnegative entries: if an entry were negative, replacing it by its absolute value would give a functional  $y^* \in \Sph(E^*)$ satisfying $y^*(s) > \big(J_E(s)\big)(s)$, a contradiction.
If $\eta$ is the diagonal matrix (with respect to the same orthonormal basis as $\sigma$) corresponding to the vector $J_E(s) \in E^*$, we then clearly have $\eta \ge 0$, $\eta \sigma = \sigma\eta \ge 0$, $\tr( \eta \sigma  ) = 1$ and 
from Proposition \ref{prop-unitarily-invariant-matrix-norm-basics}.(c) $\n{\eta}_{X^*} = \n{J_E(s)}_{E^*} = 1$.
Therefore, since $X$ is smooth, $\eta = J_X(\sigma)$.
The only part of the conclusion we have not explicitly checked is that $\eta\sigma \in \Sph(S_1^n)$, but this is clear since $\eta\sigma \ge 0$ and $\tr(\eta\sigma) = 1$.
\end{proof}

\begin{lemma}\label{lemma-unique-factorization}
Let $X=(M_n, \n{\cdot})$ be an unitarily invariant smooth norm.
If $\sigma_1,\sigma_2 \in \Sph(X)^+$, satisfy $J_X(\sigma_1)\sigma_1 = J_X(\sigma_2)\sigma_2$, then $\sigma_1 = \sigma_2$.
\end{lemma}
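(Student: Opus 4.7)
The plan is to combine a matrix AM--GM inequality with standard norm-duality on the bilinear pairing $\tr(\eta\tau)$. Set $\rho := J_X(\sigma_1)\sigma_1 = J_X(\sigma_2)\sigma_2$. By Lemma \ref{lemma-G-gives-states}, each $\sigma_i$ commutes with $J_X(\sigma_i)$ (and hence with $\rho$), and $\rho \in \Sph(S_1^n)^+$. Spectrally decompose $\C^n = \bigoplus_{\lambda} V_\lambda$ with respect to $\rho$; since both $\sigma_i$ and $J_X(\sigma_i)$ preserve each $V_\lambda$, the identity $\sigma_i|_{V_\lambda}\,J_X(\sigma_i)|_{V_\lambda} = \lambda\, I_{V_\lambda}$ on any eigenspace with $\lambda>0$ forces $\sigma_i|_{V_\lambda}$ to be invertible, with $J_X(\sigma_i)|_{V_\lambda} = \lambda\,(\sigma_i|_{V_\lambda})^{-1}$.

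I would then expand the two cross-traces blockwise as
\[
\tr\bigl(J_X(\sigma_1)\sigma_2\bigr) + \tr\bigl(J_X(\sigma_2)\sigma_1\bigr) = \sum_{\lambda > 0}\lambda\,\Big[\tr\bigl((\sigma_1|_{V_\lambda})^{-1}\sigma_2|_{V_\lambda}\bigr) + \tr\bigl((\sigma_2|_{V_\lambda})^{-1}\sigma_1|_{V_\lambda}\bigr)\Big].
\]
Substituting the positive definite $C_\lambda := (\sigma_1|_{V_\lambda})^{-1/2}\,\sigma_2|_{V_\lambda}\,(\sigma_1|_{V_\lambda})^{-1/2}$, each bracket equals $\tr(C_\lambda) + \tr(C_\lambda^{-1}) \ge 2\dim V_\lambda$ by the eigenvalue-wise inequality $c+c^{-1}\ge 2$, with equality iff $C_\lambda=I$, i.e.\ $\sigma_1|_{V_\lambda}=\sigma_2|_{V_\lambda}$. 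Summing with the $\lambda$ weights yields a lower bound of $2\tr(\rho)=2$. On the other hand, norm duality (Proposition \ref{prop-unitarily-invariant-matrix-norm-basics}.(c)) bounds each cross-trace by $\n{J_X(\sigma_i)}_{X^*}\,\n{\sigma_j}_X = 1$, so the sum is also at most $2$. Equality must therefore hold throughout.

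The AM--GM equality yields $\sigma_1|_{V_\lambda}=\sigma_2|_{V_\lambda}$ on every positive-eigenvalue block, so $\sigma_1$ and $\sigma_2$ agree on $\supp(\rho)$. The duality equality $\tr(J_X(\sigma_1)\sigma_2)=1$ says that $J_X(\sigma_1)$ is a norming functional for $\sigma_2$, so smoothness of $X$ gives $J_X(\sigma_1)=J_X(\sigma_2)$. The step I expect to be most delicate is transferring this agreement to $V_0 = \ker(\rho)$, where $\sigma_i|_{V_0}$ and $J_X(\sigma_i)|_{V_0}$ are merely commuting PSD operators whose product vanishes. My plan there is to invoke smoothness of the associated sequence space $E$ (Proposition \ref{prop-unitarily-invariant-matrix-norm-basics}.(d)) through a one-sided-derivative computation showing that if $s\in\Sph(E)^+$ has $s_k=0$ then the corresponding entry of $J_E(s)$ also vanishes; combining this coordinatewise information with $J_X(\sigma_1)=J_X(\sigma_2)$ on $V_0$ and the product-zero condition should pin down $\sigma_1|_{V_0}=\sigma_2|_{V_0}$, reducing first to invertible $\sigma_i$ by approximation via continuity of the duality map if necessary.
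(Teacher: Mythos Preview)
Your approach is genuinely different from the paper's: the paper simply reduces to the already-known commutative statement for $1$-unconditional sequence spaces (from \cite{Odell-Schlumprecht} and \cite{Benyamini-Lindenstrauss}) via the diagonalization in Lemma~\ref{lemma-G-gives-states}, whereas you give a direct matrix argument combining an AM--GM inequality with norm duality. The part of your argument on $\supp(\rho)$ is correct and rather elegant; it yields both $\sigma_1|_{\supp(\rho)}=\sigma_2|_{\supp(\rho)}$ and, via the equality $\tr\big(J_X(\sigma_1)\sigma_2\big)=1$ together with smoothness, $J_X(\sigma_1)=J_X(\sigma_2)$. (Minor point: your blockwise expansion silently drops the $V_0$ contributions $\tr\big(J_X(\sigma_i)|_{V_0}\,\sigma_j|_{V_0}\big)$ to the cross-traces, but these are traces of products of positive semidefinite operators and hence nonnegative, so the lower bound survives and at equality they are forced to vanish.)

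The real gap is on $V_0=\ker(\rho)$, and your proposed fix does not close it. Your observation that smoothness of $E$ forces $J_E(s)_k=0$ whenever $s_k=0$ is correct and yields $J_X(\sigma_i)|_{V_0}=0$; but once $\eta:=J_X(\sigma_1)=J_X(\sigma_2)$ vanishes on $V_0$, neither the identity $\eta\sigma_1=\eta\sigma_2$ nor the product-zero condition says anything further about $\sigma_i|_{V_0}$, and the approximation-by-invertibles idea does not help since an invertible $\sigma_i$ can still have $J_X(\sigma_i)|_{V_0}=0$. In fact the step cannot be completed under smoothness alone: let $E$ be the smooth $1$-symmetric norm on $\R^2$ whose unit ball is the $\ell_\infty$ square with its four corners rounded off by circular arcs; for $0<a<b$ small enough the diagonal matrices $\sigma_1=\mathrm{diag}(1,a)$ and $\sigma_2=\mathrm{diag}(1,b)$ both lie in $\Sph(X)^+$, both satisfy $J_X(\sigma_i)=\mathrm{diag}(1,0)$, and hence $J_X(\sigma_1)\sigma_1=\mathrm{diag}(1,0)=J_X(\sigma_2)\sigma_2$ while $\sigma_1\neq\sigma_2$. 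What is actually needed is strict convexity of $X$ (equivalently, injectivity of $J_X$ on the sphere), which is present in the only place the paper uses this lemma (Lemma~\ref{lemma-G-is-F-inverse} assumes $X$ uniformly convex) and is implicit in the cited commutative sources; with that extra hypothesis your duality step $J_X(\sigma_1)=J_X(\sigma_2)$ already finishes the proof.
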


\begin{proof}
Note that the corresponding statement for $1$-unconditional norms on finite-dimensional spaces is true: this is contained in the proof of \cite[Prop. 2.6]{Odell-Schlumprecht}, see also \cite[Lemma 9.5.(ii)]{Benyamini-Lindenstrauss}.
The version for the unitarily invariant norm $X$ then follows by relating the duality map of $X$ with the duality map for the associated $1$-symmetric norm, just as we did in the proof of Lemma \ref{lemma-G-gives-states}.
\end{proof}

When working with $L_p$ spaces, both commutative and noncommutative, two fundamental tools are H\"older's inequality and complex interpolation. In particular, these two tools are heavily used in the arguments of \cite{Ricard}. To adapt said arguments to the setting of the present paper, we will need versions of the aforementioned two tools in the setting of unitarily invariant matrix norms. The first one is quite well-known \cite[Ex. IV.2.7]{Bhatia}.

\begin{proposition}[H\"older's inequality for unitarily invariant matrix norms]
If $\tn{\cdot}$ is a unitarily invariant norm on $M_n$, and $p,q,r \ge 1$ satisfy
 $1/p+1/q=1/r$, then for any $A,B \in M_n$ we have 
\[
\tn{|AB|^r}^{1/r} \le \tn{ |A|^p }^{1/p} \, \tn{|B|^q}^{1/q}.
\]
\end{proposition}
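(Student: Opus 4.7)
The plan is to reduce this matrix inequality to a vector inequality via the bijection of Proposition~\ref{prop-unitarily-invariant-matrix-norm-basics}(b). Let $\n{\cdot}_E$ denote the 1-symmetric norm on $\R^n$ corresponding to $\tn{\cdot}$, so that $\tn{|X|^\alpha} = \n{s(X)^\alpha}_E$ (with the power applied coordinatewise to the singular value vector). The inequality to establish becomes
\[
\n{s(AB)^r}_E^{1/r} \le \n{s(A)^p}_E^{1/p}\, \n{s(B)^q}_E^{1/q}.
\]

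First I would invoke the classical Horn/Weyl log-majorization inequality for singular values of a product, namely $\prod_{j=1}^k s_j(AB) \le \prod_{j=1}^k s_j(A) s_j(B)$ for every $k \le n$. Raising both sides to the $r$-th power preserves weak log-majorization, and weak log-majorization of nonnegative sequences implies ordinary weak majorization, so
\[
\bigl(s_j(AB)^r\bigr)_j \prec_w \bigl(s_j(A)^r s_j(B)^r\bigr)_j.
\]
The Ky Fan dominance principle (any 1-symmetric norm is monotone under weak majorization of nonnegative vectors) then yields $\n{s(AB)^r}_E \le \n{s(A)^r \cdot s(B)^r}_E$.

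Next I would invoke scalar Hölder for the 1-symmetric norm $\n{\cdot}_E$. Setting $\alpha = p/r$ and $\beta = q/r$, the hypothesis $1/p + 1/q = 1/r$ with $p,q,r \ge 1$ forces $\alpha,\beta \ge 1$ and $1/\alpha + 1/\beta = 1$; applying the standard vector inequality
\[
\n{(a_j b_j)}_E \le \n{(a_j^\alpha)}_E^{1/\alpha}\, \n{(b_j^\beta)}_E^{1/\beta},
\]
valid for any 1-symmetric norm (e.g.\ via log-convexity of the norm on the convexification scale, or by complex interpolation between $E^{(\alpha)}$ and $E^{(\beta)}$), to $a_j = s_j(A)^r$ and $b_j = s_j(B)^r$ gives $\n{s(A)^r \cdot s(B)^r}_E \le \n{s(A)^p}_E^{r/p}\, \n{s(B)^q}_E^{r/q}$. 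Chaining this with the previous step and taking $r$-th roots completes the argument.

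The conceptual substance is carried by Horn's log-majorization for products of singular values together with Ky Fan dominance; the rest is bookkeeping. The main potential snag is ensuring the convexification exponents $\alpha,\beta$ are admissible for the vector Hölder inequality, but this is automatic: $1/p+1/q=1/r$ with $p,q,r \ge 1$ forces both $p \ge r$ and $q \ge r$, so $\alpha,\beta \ge 1$.
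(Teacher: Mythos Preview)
Your argument is correct. The paper does not actually prove this proposition: it simply records it as well-known and points to \cite[Ex.~IV.2.7]{Bhatia}. Your route---Horn's log-majorization $\prod_{j\le k} s_j(AB)\le \prod_{j\le k} s_j(A)s_j(B)$, then ``weak log-majorization $\Rightarrow$ weak majorization'' applied to the $r$-th powers, then Ky Fan dominance, then the scalar H\"older inequality $\n{ab}_E \le \n{|a|^\alpha}_E^{1/\alpha}\n{|b|^\beta}_E^{1/\beta}$ for the associated symmetric gauge function---is exactly the standard proof that the cited exercise has in mind, so there is nothing to contrast.

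Two small points worth making explicit if you write this up. First, the product sequence $(s_j(A)s_j(B))_j$ need not be decreasing, but Horn's inequality still yields weak log-majorization after rearrangement because the product of the $k$ largest entries of a nonnegative vector dominates the product of any $k$ entries. Second, the vector H\"older step is most cleanly justified by Young's inequality $a_jb_j \le a_j^\alpha/\alpha + b_j^\beta/\beta$ together with the triangle inequality in $E$ (after normalizing $\n{|a|^\alpha}_E=\n{|b|^\beta}_E=1$); the interpolation justification you allude to works too but is heavier than needed.
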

See \cite[Thm. 2.8]{Simon} for a far reaching generalization, showing that any H\"older-type inequality for sequence spaces implies an analogous one for the corresponding unitarily invariant ideals.

The second result we will use, which will play the role of interpolation, is the following lemma.
It does not seem to be as well-known as H\"older's inequality (though it does appear in \cite[Ex. 2.7.12.(i)]{PDM}), so we provide a proof for completeness.

\begin{lemma}\label{lemma-contractions}
Let $T : M_n \to M_n$ be a linear operator. If $T$ is a contraction with respect to both the operator and trace norms, then it is a contraction with respect to any unitarily invariant norm on $M_n$.
\end{lemma}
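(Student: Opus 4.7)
The plan is to reduce to the family of Ky Fan $k$-norms and use Ky Fan's dominance theorem: for $A,B \in M_n$, one has $\tn{A} \le \tn{B}$ for every unitarily invariant norm $\tn{\cdot}$ on $M_n$ if and only if the Ky Fan $k$-norms $\n{A}_{(k)} := \sum_{j=1}^k s_j(A)$ satisfy $\n{A}_{(k)} \le \n{B}_{(k)}$ for all $k=1,\dots,n$. Thus it is enough to show that under the assumptions, $T$ is a contraction with respect to each $\n{\cdot}_{(k)}$.

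The second ingredient is the following well-known variational characterization of the Ky Fan $k$-norm, which is (essentially) the Peetre $K$-functional of the compatible couple $(S_\infty^n, S_1^n)$:
\[
\n{A}_{(k)} = \inf\bigl\{ \n{B}_1 + k\n{C}_\infty \;:\; A = B + C,\ B,C \in M_n\bigr\}.
\]
Given this identity, the contraction property follows immediately: for any decomposition $A = B + C$, linearity gives $T(A) = T(B) + T(C)$, and the hypothesis on $T$ yields
\[
\n{T(A)}_{(k)} \le \n{T(B)}_1 + k\n{T(C)}_\infty \le \n{B}_1 + k\n{C}_\infty.
\]
Taking the infimum over all such decompositions gives $\n{T(A)}_{(k)} \le \n{A}_{(k)}$, and Ky Fan dominance finishes the proof.

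The only part that requires some care is the $K$-functional identity displayed above, but this is a classical fact (proved, for instance, by using the singular value decomposition to split $A$ into its top-$k$ and bottom-$(n-k)$ singular value parts; the upper bound of this concrete decomposition matches the trivial lower bound coming from the inequalities $s_j(B+C) \le s_j(B) + \n{C}_\infty$ for $1 \le j \le k$). I expect this to be the main technical step; everything else is just assembling standard tools. An alternative route would invoke the Calderón--Mityagin theorem, asserting that $(S_1^n, S_\infty^n)$ is an exact Calderón couple, but the Ky Fan dominance approach is shorter and self-contained.
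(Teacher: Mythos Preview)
Your proposal is correct and follows essentially the same route as the paper: reduce to the Ky Fan $k$-norms via the Fan dominance theorem, then use the variational formula $\n{A}_{(k)} = \inf\{ \n{B}_1 + k\n{C}_\infty : A = B+C\}$ (which the paper cites as \cite[Prop.~IV.2.3]{Bhatia}) together with linearity of $T$. The only difference is cosmetic: the paper invokes the variational formula as a reference, while you sketch its proof.
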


\begin{proof}
By the Fan Dominance Theorem \cite[Thm. IV.2.2]{Bhatia}, it suffices to check that $T$ is a contraction with respect to the Ky Fan norms $\n{\cdot}_{(k)}$, $k=1,\dotsc,n$. Note that from hypothesis we already have this for $k=1,n$, as $\n{\cdot}_{(1)}$ is the operator norm and $\n{\cdot}_{(n)}$ is the trace norm. Fix $1<k<n$.
Fix $A \in  M_n$, and write it as $A = B + C$ with $B,C \in M_n$.
By \cite[Prop. IV.2.3]{Bhatia}, since $T(A) = T(B) + T(C)$,
$$
\n{T(A)}_{(k)} \le \n{T(B)}_{(n)} + k \n{T(C)}_{(1)} \le  \n{B}_{(n)} + k \n{C}_{(1)}.
$$
Taking the infimum over all such $B,C$ and using \cite[Prop. IV.2.3]{Bhatia} again, we conclude $\n{T(A)}_{(k)} \le \n{A}_{(k)}$.
\end{proof}

\section{The $p$-convexification generalized Mazur map}\label{sec-convexification}

The main goal of this section is to prove a noncommutative version of \cite[Prop. 2.8]{Odell-Schlumprecht}.
Our proofs follow closely the spirit of \cite{Ricard},
where an analogous result was proved for the case of noncommutative $L_p$ spaces.

Recall that for a $1$-unconditional sequence space its $p$-convexification is defined by $\n{x}_{E^{(p)}} = \n{|x|^p}_E^{1/p}$.
Therefore, it follows easily that when $E$ is $1$-symmetric we have
$$
\n{A}_{S_{E^{(p)}}} = \n{|A|^p}_{S_E}^{1/p}.
$$
For $1\le p < \infty$ define $G_p :  S_{E^{(p)}} \to S_E$ by $G_p(x) = u|x|^p$, where $x$ has polar representation $x=u|x|$.
Note that $G_p$ maps $\Sph(S_{E^{(p)}})$ to $\Sph(S_E)$.
The remainder of this section will be devoted to proving that $G_p$ is a uniform homeomorphism, with the moduli of continuity of $G_p$ and $G_p^{-1}$ depending only on $p$.

\begin{remark}\label{remark-2-by-2-self-adjoint}
We will repeatedly use a ``$2\times 2$ trick'' that allows us to reduce inequalities to the case of self-adjoint elements.
Recall that elements of $S_E$ can be understood as (infinite) matrices.
If $x \in S_E$, define the self adjoint element
$$
\widetilde{x} = \begin{pmatrix}
 0&x\\
 x^*&0
 \end{pmatrix}\
$$
and observe that
$$
2\n{x}_{S_E} \ge \n{\widetilde{x}}_{S_E} \ge \n{x}_{S_E}.
$$
\end{remark}

The following Lemma corresponds to \cite[Lemma 2.1]{Ricard}.

\begin{lemma}\label{lemma-Gp-inverse-uc-on-positive-elements}
Let $1\le p < \infty$, and let $x, y \in S_{E^{(p)}}^+$.
Then
$$
\n{x-y}^p_{S_{E^{(p)}}} \le \n{x^p - y^p}_{S_E}.
$$
\end{lemma}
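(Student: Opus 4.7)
My plan is to follow \cite[Lemma 2.1]{Ricard}, where the analogous estimate is established for noncommutative $L_p$ spaces, and adapt it to the setting of general unitarily invariant ideals using the tools developed in Sections \ref{sec-preliminaries} and 3. The case $p=1$ is a trivial equality, so I focus on $p>1$. First I would reduce to finite dimensions using Proposition \ref{prop-approximations-in-S_E}: the compressions $P_m x P_m$ and $P_m y P_m$ remain positive, converge to $x$ and $y$ in $S_{E^{(p)}}$, and continuity of the functional calculus on positive operators ensures $(P_m x P_m)^p \to x^p$ and $(P_m y P_m)^p \to y^p$ in $S_E$, so that both sides of the desired inequality pass to the limit.

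In finite dimensions, setting $A = x^p$ and $B = y^p$, the inequality rewrites as the H\"older-continuity statement
$$\|A^{1/p} - B^{1/p}\|_{S_{E^{(p)}}} \le \bigl\| |A - B|^{1/p} \bigr\|_{S_{E^{(p)}}} = \|A - B\|_{S_E}^{1/p}$$
for positive $A, B$, reflecting the operator monotonicity of $t \mapsto t^{1/p}$ on $[0, \infty)$ for $p \ge 1$. Following Ricard, I would invoke the integral representation
$$t^{1/p} = \frac{\sin(\pi / p)}{\pi} \int_0^\infty \lambda^{1/p - 1} \frac{t}{\lambda + t}\, d\lambda$$
together with the resolvent identity $(B + \lambda)^{-1} - (A + \lambda)^{-1} = (A + \lambda)^{-1}(A - B)(B + \lambda)^{-1}$ to write
$$A^{1/p} - B^{1/p} = \frac{\sin(\pi / p)}{\pi}\int_0^\infty \lambda^{1/p} (A + \lambda)^{-1} (A - B) (B + \lambda)^{-1}\, d\lambda.$$
I would then factor $A - B = |A - B|^{1/2}\, s\, |A - B|^{1/2}$ with $s$ self-adjoint of operator norm $\le 1$, distribute the two $|A - B|^{1/2}$ factors between the resolvents, and close the estimate via H\"older's inequality for unitarily invariant matrix norms in the $S_{E^{(p)}}$ norm. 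Lemma \ref{lemma-contractions} would then transfer the endpoint bounds from the operator and trace norms to the general unitarily invariant norm at hand.

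The main obstacle is the convergence of the $\lambda$-integral: the naive estimate $\|(A + \lambda)^{-1}\|_\infty \le 1/\lambda$ produces divergent integrals at both $\lambda = 0$ and $\lambda = \infty$. Following Ricard, the factor $|A - B|$ must be split between the two resolvents with carefully tuned exponents $\alpha, 1 - \alpha$, so that the integrand decays sufficiently at both endpoints while the outer H\"older estimate lands in the correct $S_{E^{(p)}}$ norm. Adapting this splitting from the specific $L_p$ setting of \cite{Ricard} to a general unitarily invariant ideal, where one cannot rely on the Hilbert space structure of $L_2$, is the technical crux, and is precisely where the tools from Section 3 -- the H\"older inequality for unitarily invariant matrix norms and Lemma \ref{lemma-contractions} -- come into play.
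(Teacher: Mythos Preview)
Your approach is considerably more elaborate than the paper's and leaves the decisive step unresolved. After the same density reduction to finite matrices, the paper simply invokes a known matrix inequality: for positive $x,y\in M_n$, $p\ge 1$, and \emph{any} unitarily invariant norm $\tn{\cdot}$,
\[
\tn{\,|x-y|^p\,}\ \le\ \tn{\,x^p-y^p\,},
\]
citing \cite[Eq.~X.10]{Bhatia}. Since $\n{x-y}_{S_{E^{(p)}}}^p=\n{\,|x-y|^p\,}_{S_E}$ by the very definition of the $p$-convexification, the lemma follows in one line. No integral representation, no resolvent identity, and no splitting of $|A-B|$ between the two resolvents is needed.

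The integral-representation machinery you describe is the technique deployed \emph{later} in the paper (Lemma~\ref{lemma-Gp-uc-on-positive-special-case}) and in \cite{Ricard} to bound $\n{x^{1+\theta}-y^{1+\theta}}$ in terms of $\n{x-y}$, i.e.\ the \emph{opposite} direction, where no off-the-shelf matrix inequality is available. For the direction in the present lemma the inequality is already in Bhatia for all unitarily invariant norms, so the reduction to matrices is the end of the proof, not the beginning. You also explicitly acknowledge that you have not carried out the exponent-splitting that would make the $\lambda$-integral converge; even if that could be made to work here (and it is not obvious it can, since you no longer have the specific $L_p$ exponents Ricard balances against one another), it would be substantial extra effort to recover an inequality that is already a textbook fact.
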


\begin{proof}
By density, we may assume $x$ and $y$ are finitely supported. 
Note that
$$
\n{|x-y|^p}_{S_E} \le \n{ x^p-y^p }_{S_E},
$$
since this inequality holds for any unitarily invariant norm \cite[Eq. X.10]{Bhatia}, which is the desired result.
\end{proof}

Note that the previous Lemma gives the uniform continuity of $G_p^{-1}$ on positive elements.

Our next Lemma corresponds to \cite[Lemma 2.2]{Ricard}.

\begin{lemma}\label{lemma-Gp-uc-on-positive-special-case}
Let $\theta \in (0,1]$ and let $x, y \in S_{E^{(1+\theta)}}^+$.
Then
$$
\n{x^{1+\theta}-y^{1+\theta}}_{S_E} \le 3 \n{x-y}_{S_{E^{(1+\theta)}}} \max\big\{ \n{x}_{S_{E^{(1+\theta)}}}, \n{y}_{S_{E^{(1+\theta)}}} \big\}^\theta.
$$
\end{lemma}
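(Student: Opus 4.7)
Following the strategy of \cite[Lemma 2.2]{Ricard}, the plan is to first apply Proposition \ref{prop-approximations-in-S_E} to reduce to finite-dimensional matrices, where functional calculus and the Stieltjes representation of fractional powers are available without restriction. The argument rests on the algebraic identity
$$
x^{1+\theta} - y^{1+\theta} = (x-y)x^\theta + y(x^\theta - y^\theta),
$$
and the goal is to bound each summand so that the total constant is at most $3$.

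For the first summand, H\"older's inequality for unitarily invariant matrix norms, applied with conjugate exponents $1+\theta$ and $(1+\theta)/\theta$ and using the identity $\n{x^\theta}_{S_{E^{((1+\theta)/\theta)}}} = \n{x}_{S_{E^{(1+\theta)}}}^\theta$, immediately gives
$$
\n{(x-y)x^\theta}_{S_E} \le \n{x-y}_{S_{E^{(1+\theta)}}}\,\n{x}_{S_{E^{(1+\theta)}}}^\theta \le \n{x-y}_{S_{E^{(1+\theta)}}}\,M^\theta,
$$
contributing $1$ to the final constant. The second summand is harder: a direct combination of H\"older with the Birman--Koplienko--Solomyak inequality $\n{x^\theta - y^\theta}_{S_F} \le \n{|x-y|^\theta}_{S_F}$ would yield only $M\,\n{x-y}_{S_{E^{(1+\theta)}}}^\theta$, which has the wrong exponent on $\n{x-y}$ for small differences. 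Instead, for $\theta \in (0,1)$ I plan to use the Stieltjes representation
$$
y(x^\theta - y^\theta) = \frac{\sin(\pi\theta)}{\pi}\int_0^\infty \lambda^\theta\,\frac{y}{\lambda+y}(x-y)\frac{1}{\lambda+x}\,d\lambda,
$$
and to estimate the $S_E$-norm of the integrand via a three-factor H\"older inequality (iterated from the two-factor version recorded in Section 3) that places $(x-y)$ in $S_{E^{(1+\theta)}}$ while absorbing the two resolvent factors into the operator norm via $\n{y/(\lambda+y)}_\infty \le \min(1, \n{y}_\infty/\lambda)$ and $\n{1/(\lambda+x)}_\infty \le 1/\lambda$. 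Splitting the domain at $\lambda = \n{y}_\infty$ and evaluating the two resulting Beta-type integrals in $\lambda$ produces a cancellation with $\sin(\pi\theta)/\pi$ that yields $\n{y(x^\theta - y^\theta)}_{S_E} \le 2\,\n{x-y}_{S_{E^{(1+\theta)}}}\,M^\theta$; the boundary case $\theta = 1$ is immediate from $x^2 - y^2 = (x-y)x + y(x-y)$.

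The main obstacle will be precisely this second estimate: arranging the H\"older exponents inside the $\lambda$-integral so that $(x-y)$ appears \emph{linearly} (rather than raised to the power $\theta$) while the auxiliary operator factors remain controlled uniformly in $\lambda$. In effect, one is performing an explicit interpolation between the trivial case $\theta = 0$ and the quadratic case $\theta = 1$, with the Stieltjes integral providing the concrete bridge. Once this integral estimate is in hand, summing the contributions of the two summands gives the total constant $1 + 2 = 3$, as claimed.
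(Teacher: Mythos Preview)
There is a genuine gap in the treatment of the second summand $y(x^\theta - y^\theta)$. Your proposed ``three-factor H\"older'' is not valid: if both resolvent factors $y(\lambda+y)^{-1}$ and $(\lambda+x)^{-1}$ are placed in the operator norm, then the ideal property yields
\[
\Big\| \tfrac{y}{\lambda+y}(x-y)\tfrac{1}{\lambda+x} \Big\|_{S_E} \le \Big\|\tfrac{y}{\lambda+y}\Big\|_\infty \,\|x-y\|_{S_E}\,\Big\|\tfrac{1}{\lambda+x}\Big\|_\infty,
\]
with the middle factor in $S_E$, not $S_{E^{(1+\theta)}}$. For a genuine H\"older inequality with the product in $S_E$ and $(x-y)$ in $S_{E^{(1+\theta)}}$, the remaining factors must together carry exponent $(1+\theta)/\theta$, and there is no dimension-free control of $\|(\lambda+x)^{-1}\|_{S_{E^{(q)}}}$ or $\|y(\lambda+y)^{-1}\|_{S_{E^{(q)}}}$ for $q<\infty$. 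Consequently your $\lambda$-integral yields at best a bound of the form $C\,\|y\|_\infty^\theta\,\|x-y\|_{S_E}$, and in general there is no inequality $\|x-y\|_{S_E} \le C\,\|x-y\|_{S_{E^{(1+\theta)}}}$ (take $E=\ell_1$ and a difference with many small singular values). The approach thus fails precisely at the point you flagged as the main obstacle.

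The paper (faithfully following \cite[Lemma~2.2]{Ricard}, which does \emph{not} proceed via the decomposition you wrote) instead uses the integral representation of $s^{1+\theta}$ itself, writes $x^{1+\theta}-y^{1+\theta}$ as an integral of the derivative of $f_t(s)=s^2(s+t)^{-1}$ along the segment from $y$ to $x$, and splits into three terms. Two of them are handled by the same H\"older argument you used for your first summand; the crucial third term is of the form $\int t^\theta g_t\,\delta\, g_t\,\tfrac{dt}{t}$ with $g_t = (y+u\delta)(y+u\delta+t)^{-1}$, and the key step is to recognize that $z\mapsto c_\theta\int t^\theta v_t z v_t\,\tfrac{dt}{t}$ is unital, completely positive, and trace-preserving, hence a contraction for \emph{every} unitarily invariant norm by Lemma~\ref{lemma-contractions}. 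This is the replacement for interpolation in the general $S_E$ setting, and it is the ingredient your argument is missing.
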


\begin{proof}
Once again, we will assume that $x$ and $y$ are finitely supported.
Note that in this case, the conjugate exponent to $p=1+\theta$ is $p'=(1+\theta)/\theta$.
Without loss of generality, by density, we may assume that $x$ and $y$ are invertible. The restriction on $\theta$ allows us to have the integral representation
$$
s^{1+\theta} = c_\theta \int_{\R_+} \frac{t^\theta s^2}{s+t} \,
\frac{dt}{t}.
$$ 
On invertible elements the maps $f_t:s\mapsto
\frac{s^2}{s+t}= s (s+t)^{-1}s$ are differentiable and 
$$
D_sf_t(\delta)= \delta (s+t)^{-1}s+s(s+t)^{-1}\delta-s(s+t)^{-1}\delta (s+t)^{-1}s.
$$
Hence putting $\delta=x-y$, we get the integral representation
 $$
 x^{1+\theta}-y^{1+\theta}= c_\theta \int_0^1 \int_{\R_+} t^\theta 
D_{y+u\delta} f_t(\delta) \,\frac{dt}{t} {du}.
$$
We get, letting $g_t(s)=s(s+t)^{-1}$
\begin{multline*}
    x^{1+\theta}-y^{1+\theta} \\
=    \int_0^1 \Big((y+u\delta)^\theta \delta +\delta 
(y+u\delta)^\theta\Big) \,du-
c_\theta \int_0^1 \int_{\R_+} t^\theta g_t(y+u\delta) \delta g_t(y+u\delta)\,\frac{dt}{t} du.
\end{multline*}
To estimate the first term, we use the H\"older inequality for unitarily invariant norms. Note that we are also using the fact that $y+u\delta = (1-u)y + ux$ is positive.
\begin{multline*}
\n{ (y+u\delta)^\theta \delta }_{S_E} \le \n{(y+u\delta)^\theta}_{S_{E^{(p')}}}   \n{\delta}_{S_{E^{(p)}}}
 = \n{(y+u\delta)^{\theta p'}}_{S_{E}}^{1/p'} \n{x-y}_{S_{E^{(p)}}}  \\
 = \n{y+u\delta}_{S_{E^{(p)}}}^\theta \n{x-y}_{S_{E^{(p)}}} = \n{(1-u)y+ux}_{S_{E^{(p)}}}^\theta \n{x-y}_{S_{E^{(p)}}} 
 \\
 \le\max\big\{ \n{x}_{S_{E^{(1+\theta)}}}, \n{y}_{S_{E^{(1+\theta)}}} \big\}^\theta \n{x-y}_{S_{E^{(p)}}}.
\end{multline*}
The term $\delta 
(y+u\delta)^\theta$ can be estimated analogously.

When $u$ is fixed, note that $g_t(y+u\delta)$ is an invertible positive contraction. Put 
$$\gamma^2=c_\theta\int_{\R_+} t^\theta g_t(y+u\delta)^2  \frac{dt}{t}\leq (y+u\delta)^\theta,$$
and write $g_t(y+u\delta)=v_t\gamma$ so that 
$v_t$ and $y+u\delta$ commute and 
$$c_\theta\int_{\R_+} t^\theta v_t^2 \frac{dt}{t}=1.$$ Therefore the
 map defined on $M_n$, $z\mapsto c_\theta\int_{\R_+} t^\theta
 v_t z v_t \frac{dt}{t}$ is unital, completely
 positive and trace preserving, hence it is a contraction with respect to both the operator and trace norms, so by Lemma \ref{lemma-contractions} it is a contraction on $S_E$.
Applying it to $z= \gamma \delta \gamma$,
we deduce using the H\"older inequality again
\begin{multline*}
\Big\|c_\theta \int_{\R_+} t^\theta g_t(y+u\delta) \delta g_t(y+u\delta)\,\frac{dt}{t}\Big\|_{S_E} \leq \big\| \gamma \delta \gamma\big\|_{S_E} \\
\leq \big\|\delta\big
\|_{S_{E^{(1+\theta)}}} \big\|\gamma\big\|_{S_{E^{(2\theta/(1+\theta))}}}^2 
\leq  \big\|\delta\big\|_{S_{E^{(1+\theta)}}}  \big\|(y+u\delta)^{\theta/2}\big\|_{S_{E^{(2(1+\theta)/\theta)}}}^2 \\
= \big\|\delta\big\|_{S_{E^{(1+\theta)}}}  \big\|y+u\delta\big\|_{S_{E^{(1+\theta)}}}^\theta
\le \max\big\{ \n{x}_{S_{E^{(1+\theta)}}}, \n{y}_{S_{E^{(1+\theta)}}} \big\}^\theta \n{x-y}_{S_{E^{(1+\theta)}}}.
\end{multline*}
Putting all the estimates together yields the desired conclusion.
\end{proof}

The next result corresponds to \cite[Cor. 2.3]{Ricard}

\begin{corollary}\label{cor-Gp-uc-on-positive}
Let $1\le p < \infty$ and let $x, y \in S_{E^{(p)}}^+$.
Then
$$
\n{x^{p}-y^{p}}_{S_E} \le 3p \n{x-y}_{S_{E^{(p)}}} \max\big\{ \n{x}_{S_{E^{(p)}}}, \n{y}_{S_{E^{(p)}}} \big\}^{p-1}.
$$
\end{corollary}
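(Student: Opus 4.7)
The plan is to bootstrap Lemma \ref{lemma-Gp-uc-on-positive-special-case} to all $p \ge 1$ by induction on $\lceil p \rceil$. The base case $\lceil p \rceil \le 2$ is immediate: for $p=1$ the inequality is trivial, and for $p \in (1,2]$ the Lemma applies with $\theta = p-1$, giving constant $3 \le 3p$.

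For the inductive step $p > 2$, I would exploit the operator identity
\[
x^p - y^p \;=\; (x-y)\,x^{p-1} \;+\; y\,(x^{p-1}-y^{p-1})
\]
and apply H\"older's inequality for unitarily invariant norms with conjugate exponents $p$ and $p/(p-1)$ to each summand. For the first, since $x \ge 0$ the definition of the $p$-convexification gives $\n{x^{p-1}}_{S_{E^{(p/(p-1))}}} = \n{x}_{S_{E^{(p)}}}^{p-1}$, so H\"older bounds it by $\n{x-y}_{S_{E^{(p)}}}\,M^{p-1}$, where $M = \max\bigl\{\n{x}_{S_{E^{(p)}}}, \n{y}_{S_{E^{(p)}}}\bigr\}$. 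For the second summand, H\"older reduces the task to bounding $\n{x^{p-1} - y^{p-1}}_{S_{E^{(p/(p-1))}}}$; here I would invoke the inductive hypothesis with $E$ replaced by $E^{(p/(p-1))}$ and exponent $p-1$, using the routine identity $(F^{(r)})^{(s)} = F^{(rs)}$ to identify the iterated convexification as $E^{(p)}$. This produces the bound $3(p-1)\,\n{x-y}_{S_{E^{(p)}}}\,M^{p-2}$ for the inner difference; summing the two contributions yields
\[
\n{x^p - y^p}_{S_E} \;\le\; \bigl(1 + 3(p-1)\bigr)\,\n{x-y}_{S_{E^{(p)}}}\,M^{p-1} \;=\; (3p-2)\,\n{x-y}_{S_{E^{(p)}}}\,M^{p-1} \;\le\; 3p\,\n{x-y}_{S_{E^{(p)}}}\,M^{p-1}.
\]

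I do not anticipate a serious obstacle. Two points will require care: (i) the induction on $\lceil p \rceil$ is well-founded because $\lceil p - 1 \rceil = \lceil p \rceil - 1$ for $p > 1$, so finitely many steps always land in the base case; and (ii) the H\"older inequality stated in the paper is formulated for $M_n$, so at each step I would first restrict to finite-rank positive $x,y$ (whose powers remain finite-rank by the continuous functional calculus) and then pass to the general case in $S_{E^{(p)}}$ by density via Proposition \ref{prop-approximations-in-S_E}.
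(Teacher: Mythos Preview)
Your proof is correct. Both your approach and the paper's rest on the same ingredients—H\"older's inequality for unitarily invariant norms, Lemma \ref{lemma-Gp-uc-on-positive-special-case}, and a telescoping identity—but they are organized differently. The paper first handles integer exponents $p=n$ in one shot via the full telescoping $x^n-y^n=\sum_{j=0}^{n-1}x^{\,n-1-j}(x-y)y^{\,j}$ (giving the sharper constant $n$), and then writes a general $p$ as $m(1+\theta)$ with $m=\lfloor p\rfloor$ and applies Lemma \ref{lemma-Gp-uc-on-positive-special-case} once to pass from exponent $m$ to exponent $p$. You instead unwind the telescoping one step at a time, reducing $p$ to $p-1$ via $x^p-y^p=(x-y)x^{p-1}+y(x^{p-1}-y^{p-1})$ and invoking the inductive hypothesis on the convexified space $E^{(p/(p-1))}$, with the base case $p\in(1,2]$ already covered by Lemma \ref{lemma-Gp-uc-on-positive-special-case}. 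Your route avoids a separate argument for integer $p$ and even produces the slightly better constant $3p-2$; the only extra bookkeeping is that the induction must be over all $1$-symmetric $E$ simultaneously so that the hypothesis applies to $E^{(p/(p-1))}$, which you have noted.
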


\begin{proof}
When $p=n \in \N$, the result is clear even with constant $p$ instead of $3p$.
For the general case, let $m = \lfloor p \rfloor$, so that $p=m(1+\theta)$ for some $\theta \in (0,1]$.
Then apply the case of $m$ and Lemma \ref{lemma-Gp-uc-on-positive-special-case}, using the fact that convexification behaves well with respect to reiteration.
\end{proof}

Observe that the previous corollary gives the uniform continuity of $G_p$ on positive elements.
To extend this to general elements, we will again use $2\times 2$ tricks as in \cite{Ricard}.
First, let us observe that we can reduce to the case of self-adjoint elements.
Let $x, y\in\Sph(S_{E^{(p)}})$ 
with polar decompositions 
$x=u|x|$ and $y=v|y|$.
Now define
 $$\widetilde x=\begin{pmatrix}
 0&x\\
 x^*&0
 \end{pmatrix}\quad\text{and}\quad \widetilde y=\begin{pmatrix}
 0&y\\
 y^*&0
 \end{pmatrix}\,.$$
They are selfadjoint with  polar decompositions
$$\widetilde x=\widetilde u |\widetilde x|=\begin{pmatrix}
 0&u\\
 u^*&0
 \end{pmatrix} \cdot \begin{pmatrix}
u|x|u^*& 0\\
 0 & |x|
 \end{pmatrix}\quad\textrm{and}\quad 
\widetilde y=\widetilde v |\widetilde y|=\begin{pmatrix}
 0&v\\ v^*&0
 \end{pmatrix}\cdot \begin{pmatrix}
v|y|v^*& 0\\
 0 & |y|
 \end{pmatrix}.$$
Therefore
$$\widetilde u |\widetilde x|^{p}=\begin{pmatrix}
 0&u |x|^{p}\\
 |x|^{p}u^*&0
 \end{pmatrix}\quad\text{and}\quad \widetilde v |\widetilde y|^{p}=\begin{pmatrix}
 0&v |y|^{p}\\
 |y|^{p}v^*&0
  \end{pmatrix},$$ 
  we then have
  $$
 \widetilde u |\widetilde x|^{p} - \widetilde v |\widetilde y|^{p}=  \begin{pmatrix}
0& u|x|^p-v|y|^p\\
(u|x|^p-v|y|^p)^* & 0
 \end{pmatrix},
  $$
and thus, as in Remark \ref{remark-2-by-2-self-adjoint}, the norms of $x-y$ and $\widetilde{x}-\widetilde{y}$ are comparable, and so are the norms of $G_p(x)-G_p(y)$ and $G_p(\widetilde{x})-G_p(\widetilde{y})$.

 Next, we reduce the desired result to a commutator estimate by using the $2\times2$-trick again.
 We use the commutator notation $[x,b]=xb-bx$.
 Put 
 $$\widetilde x=\begin{pmatrix}
  x&0\\
  0&y
  \end{pmatrix}\quad\text{and}\quad \widetilde b=\begin{pmatrix}
  0&1\\
  0&0
  \end{pmatrix}\,.$$
 So that
 $$ 
 \big\| [G_p(\widetilde{x}) ,\widetilde b] \big\|_{S_E}= \big\| G_p(x)-G_p(y)\big\|_{S_E} \qquad\textrm{and}\qquad \big\| [\widetilde x,\widetilde b] \big\|_{S_{E^{(p)}}}=\big\| x-y\big\|_{S_{E^{(p)}}}.
 $$

\begin{lemma}\label{lemma-commutator}
Let $1\le p < \infty$,  $x\in S_{E^{(p)}}^+$ and $b\in S_\infty$ then
$$
\big\| [x, b] \big\|_{S_{E^{(p)}}} \le 4\cdot 2^{1/p}\,  \Big\| \big[x^{p},b\big]\Big\|_{S_E}^{1/p},
$$
$$\Big\| \big[x^{p},b\big]\Big\|_{S_E} \leq 4\cdot3p\cdot 2\, \n{x}^{p-1}_{S_{E^{(p)}}} \big\| [x, b] \big\|_{S_{E^{(p)}}} .$$

\end{lemma}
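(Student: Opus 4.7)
Both inequalities will reduce to the positive-element estimates of Lemma \ref{lemma-Gp-inverse-uc-on-positive-elements} and Corollary \ref{cor-Gp-uc-on-positive}. The key observation, which drives the whole argument, is that for any unitary $u$ and any $x \in S_{E^{(p)}}^+$, the element $y := uxu^*$ is also positive with the same $S_{E^{(p)}}$-norm as $x$, and the differences can be rewritten as commutators:
\[
x - y = [x,u]\,u^*, \qquad x^p - y^p = [x^p,u]\,u^*.
\]
By unitary invariance, $\|x - y\|_{S_F} = \|[x,u]\|_{S_F}$ and $\|x^p - y^p\|_{S_F} = \|[x^p,u]\|_{S_F}$ for any unitarily invariant norm. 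Therefore, applying Lemma \ref{lemma-Gp-inverse-uc-on-positive-elements} and Corollary \ref{cor-Gp-uc-on-positive} to the positive pair $(x, uxu^*)$ yields both conclusions of Lemma \ref{lemma-commutator} in the special case where $b=u$ is unitary, with the sharp constants $1$ and $3p$ respectively.

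To pass from a unitary to a general $b \in S_\infty$, I would use the self-adjoint $2\times 2$ dilation $B = \begin{pmatrix} 0 & b \\ b^* & 0 \end{pmatrix}$ together with the positive block $\widetilde x = \begin{pmatrix} x & 0 \\ 0 & x \end{pmatrix}$. A direct calculation gives $[\widetilde x, B] = \begin{pmatrix} 0 & [x,b] \\ [x,b^*] & 0 \end{pmatrix}$, and since $[x,b^*] = -[x,b]^*$ (because $x = x^*$), this off-diagonal block matrix has singular values equal to those of $[x,b]$ each with multiplicity two, and analogously for $[\widetilde x^p, B]$. Taking the $S_{E^{(p)}}$- or $S_E$-norm of a doubled singular-value sequence introduces factors of at most $2^{1/p}$ (in $S_{E^{(p)}}$) and $2$ (in $S_E$), and similarly $\|\widetilde x\|_{S_{E^{(p)}}}\le 2^{1/p}\|x\|_{S_{E^{(p)}}}$. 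All the factors of $2$ (and the additional factor of $2$ coming from the first $2\times 2$ trick of Remark \ref{remark-2-by-2-self-adjoint}, if one reduces to the self-adjoint case first) will accumulate into the claimed constants $4\cdot 2^{1/p}$ and $24p$.

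For the second (upper) inequality, I would apply the unitary-case bound already established to $\widetilde x \ge 0$ and the one-parameter family $U_\lambda = e^{i\lambda B}$, then pass to the limit $\lambda \to 0$: expanding $U_\lambda\widetilde xU_\lambda^* = \widetilde x + i\lambda[B,\widetilde x] + O(\lambda^2)$ (and similarly for $\widetilde x^p$), dividing by $\lambda$ and letting $\lambda \to 0$ yields
\[
\|[B,\widetilde x^p]\|_{S_E} \le 3p\,\|\widetilde x\|_{S_{E^{(p)}}}^{p-1}\,\|[B,\widetilde x]\|_{S_{E^{(p)}}},
\]
which combined with the block-norm estimates above gives the second inequality. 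The passage to the limit is justified in the finite-dimensional approximation provided by Proposition \ref{prop-approximations-in-S_E}, where the map $\lambda \mapsto U_\lambda \widetilde x U_\lambda^*$ and its $p$-th power are analytic.

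The hard part will be the first inequality, for which this differentiation trick breaks down: applying Lemma \ref{lemma-Gp-inverse-uc-on-positive-elements} to $(\widetilde x, U_\lambda \widetilde x U_\lambda^*)$ and dividing by $\lambda^p$ produces a factor $\lambda^{1-p}$ on the right-hand side that diverges as $\lambda \to 0$ for $p > 1$, yielding no useful information. The correct approach is to apply Lemma \ref{lemma-Gp-inverse-uc-on-positive-elements} at a fixed finite scale rather than infinitesimally. The cleanest route is a further $2\times 2$ reduction to the special partial isometry $\widetilde b = \begin{pmatrix} 0 & 1 \\ 0 & 0 \end{pmatrix}$, which is in any case the form in which Lemma \ref{lemma-commutator} will be applied in the proof of the main theorem: for $\widetilde x = \begin{pmatrix} x_1 & 0 \\ 0 & x_2 \end{pmatrix}\ge 0$ and this $\widetilde b$, the commutator satisfies $[\widetilde x,\widetilde b] = \begin{pmatrix} 0 & x_1-x_2 \\ 0 & 0 \end{pmatrix}$ and $[\widetilde x^p,\widetilde b] = \begin{pmatrix} 0 & x_1^p - x_2^p \\ 0 & 0 \end{pmatrix}$, so that Lemma \ref{lemma-Gp-inverse-uc-on-positive-elements} applied to $x_1, x_2$ gives the first inequality with constant $1$ for this special $\widetilde b$. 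The general case for $b \in S_\infty$ is then handled by a further $2\times 2$ reduction via the self-adjoint block $B$ above, with the resulting constants bounded by $4\cdot 2^{1/p}$.
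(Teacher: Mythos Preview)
Your plan for the second inequality is sound and is a genuine alternative to the paper's argument: once the unitary case is in hand, differentiating the one-parameter group $U_\lambda=e^{i\lambda B}$ at $\lambda=0$ does recover the commutator estimate with $B$ self-adjoint, and the $2\times2$ dilation then gives general $b$. The paper instead uses the Cayley transform $u=(b-i)(b+i)^{-1}$ of a self-adjoint $b$ to produce a single unitary at a fixed scale, and then unwinds the algebra; both routes are legitimate for this direction.

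For the first inequality, however, there is a real gap. You correctly observe that the infinitesimal trick fails (the wrong power of $\lambda$ appears), and you propose to fall back on the special case $\widetilde b=\begin{pmatrix}0&1\\0&0\end{pmatrix}$ with block-diagonal $\widetilde x$, which is indeed just Lemma~\ref{lemma-Gp-inverse-uc-on-positive-elements} in disguise. But the claim that ``the general case for $b\in S_\infty$ is then handled by a further $2\times2$ reduction via the self-adjoint block $B$'' is not justified, and I do not see how to make it work: the $2\times2$ trick turns a general $b$ into a self-adjoint $B$, not into $\begin{pmatrix}0&1\\0&0\end{pmatrix}$, and there is no further block trick that converts a self-adjoint contraction into that partial isometry while preserving both $[x,\,\cdot\,]$ and $[x^p,\,\cdot\,]$ up to controlled constants. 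Note also that the general-$b$ statement is genuinely needed later: Lemma~\ref{lemma-commutator-self-adjoint} applies Lemma~\ref{lemma-commutator} to the compressed pieces $b_{\pm,\pm}=e_\pm b e_\pm$, which are arbitrary, so proving only the special $\widetilde b$ case would not suffice for the paper's argument.

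The device you are missing is precisely the Cayley transform used in the paper: for self-adjoint $b$ with $\|b\|_\infty\le 1$, set $u=(b-i)(b+i)^{-1}$. This is a \emph{single} unitary at a fixed finite scale, with $\|(1-u)^{-1}\|_\infty\le 1/\sqrt{2}$, and the identities $b=2i(1-u)^{-1}-i$ and $(b+i)x^p(b-i)-(b-i)x^p(b+i)=2i[x^p,b]$ let one pass back and forth between $[x,b]$, $[x,u]$, $[x^p,u]$, $[x^p,b]$ with explicit constants. Applying Lemma~\ref{lemma-Gp-inverse-uc-on-positive-elements} to the positive pair $(x,u^*xu)$ in the middle then yields the first inequality. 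This is exactly the ``fixed finite scale'' idea you allude to, but implemented through a M\"obius transformation rather than an exponential.
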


\begin{proof}
As usual, we may assume that $x$ and $b$ are finitely supported.
We may assume $\|b\|_\infty=1$ by homogeneity. Using the $2\times 2$-trick with 
$$\widetilde x=\begin{pmatrix}
 x& 0\\
 0& x
 \end{pmatrix}\quad\textrm{and}\quad \widetilde b=\begin{pmatrix}
 0&b\\
 b^*&0
 \end{pmatrix}\,,$$
we may assume $b=b^*$, while losing at most a factor of 4 in the constant.

Next, as $b=b^*$, we may use the Cayley transform defined by
$$u=(b-i)(b+i)^{-1} ,\qquad b=2i (1-u)^{-1}-i.$$
Clearly $u$ is unitary and functional calculus gives that $\|(1-u)^{-1}\|_\infty\leq \frac 1 {\sqrt 2}$. 
For the first inequality we have, using Lemma \ref{lemma-Gp-inverse-uc-on-positive-elements},
\begin{eqnarray*}
\big\| [x, b] \big\|_{S_{E^{(p)}}} &\leq& 2 
\big\| x (1-u)^{-1} -(1-u)^{-1} x \big\|_{S_{E^{(p)}}}\\
& \leq & 2 \big\|(1-u)^{-1}\big\|_\infty^2 \big\| x (1-u) -(1-u) x \big\|_{S_{E^{(p)}}}\\ & \leq& \big\| u^*x u - x \big\|_{S_{E^{(p)}}}\\
& \leq &  \big\| u^*x^p u - x^p\big\|_{S_E}^{1/p}  \\
& = &  \big\| x^p u - ux^p\big\|_{S_E}^{1/p}  \\
&\leq&  \big\|(b+i)^{-1}\big\|_\infty^{2/p}  \big\| (b+i)x^p(b-i)  - (b-i)x^p(b+i)\big\|_{S_E}^{1/p}\\
&\leq &  2^{1/p}\,  \big\| x^p b -b x^p \big\|_{S_E}^{1/p}.
\end{eqnarray*} 

Similarly, for the second inequality, but now using Corollary \ref{cor-Gp-uc-on-positive},
\begin{eqnarray*}
\big\| [x^{p}, b] \big\|_{S_E} &\leq& 2 
\big\| x^{p} (1-u)^{-1} -(1-u)^{-1} x^{p} \big\|_{S_E}\\
& \leq & 2 \big\|(1-u)^{-1}\big\|_\infty^2 \big\| x^{p} (1-u) -(1-u) x^{p} \big\|_{S_E}\\ & \leq& \big\| u^*x^{p} u - x^{p} \big\|_{S_E}\\
& \leq & 3p \big\| x u - ux\big\|_{S_{E^{(p)}}} \n{x}^{p-1}_{S_{E^{(p)}}} \\
&\leq& 3p \n{x}^{p-1}_{S_{E^{(p)}}}  \big\|(b+i)^{-1}\big\|_\infty^{2}  \big\| (b+i)x(b-i)  - (b-i)x(b+i)\big\|_{S_{E^{(p)}}}\\
&\leq & 3p\cdot 2\, \n{x}^{p-1}_{S_{E^{(p)}}}  \big\| x b -b x \big\|_{S_{E^{(p)}}}.
\end{eqnarray*} 
\end{proof}

Our next Lemma corresponds to \cite[Lemma 2.5]{Ricard}

\begin{lemma}\label{lemma-sum}
Let  $p \ge 1$.
There exists a constant $C$ such that for any
$x, y\in S_{E^{(p)}}^+$ and $b\in S_\infty$ we have
 $$
 \big\| x^pb + by^p \big\|_{S_E} \le C \n{x}_{S_{E^{(p)}}}^{p-1} \Big\| xb + by\Big\|_{S_{E^{(p)}}}
 $$
If $p \ge 3$, then there exists a constant $C_p$ such that 
for any $x, y\in S_{E^{(p)}}^+$ and $b\in S_\infty$ we have
$$\Big\| xb + by\Big\|_{S_{E^{(p)}}} \leq C_p
 \big\|b\big\|_\infty ^{1-1/p}\big\| x^pb + by^p \big\|^{1/p}_{S_E} .$$

\end{lemma}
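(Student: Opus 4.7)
The plan is to reduce both inequalities to commutator estimates in the spirit of Lemma~\ref{lemma-commutator}, using a $2\times 2$ block trick to turn the ``anticommutator-like'' expressions $xb+by$ and $x^pb+by^p$ into honest commutators with a self-adjoint element. By density I may assume $x,y,b$ are finitely supported. Setting
$$
\widetilde{x}=\begin{pmatrix}x&0\\0&-y\end{pmatrix}\qquad\text{and}\qquad\widetilde{b}=\begin{pmatrix}0&b\\0&0\end{pmatrix},
$$
I obtain a self-adjoint $\widetilde{x}$ (not positive) with $|\widetilde{x}|=\mathrm{diag}(x,y)$ and $G_p(\widetilde{x})=\mathrm{diag}(x^p,-y^p)$, and a direct block computation gives
$$
[\widetilde{x},\widetilde{b}]=\begin{pmatrix}0&xb+by\\0&0\end{pmatrix},\qquad[G_p(\widetilde{x}),\widetilde{b}]=\begin{pmatrix}0&x^pb+by^p\\0&0\end{pmatrix}.
$$
Thus $\n{[\widetilde{x},\widetilde{b}]}_{S_{E^{(p)}}}=\n{xb+by}_{S_{E^{(p)}}}$ and $\n{[G_p(\widetilde{x}),\widetilde{b}]}_{S_E}=\n{x^pb+by^p}_{S_E}$, while $\n{\widetilde{x}}_{S_{E^{(p)}}}\le\n{x}_{S_{E^{(p)}}}+\n{y}_{S_{E^{(p)}}}$. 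The two target inequalities therefore reduce to the two inequalities of Lemma~\ref{lemma-commutator}, but with ``$x\ge 0$'' replaced by ``$\widetilde{x}=\widetilde{x}^*$''. A secondary $2\times 2$ manipulation of the same kind used inside the proof of Lemma~\ref{lemma-commutator} lets me additionally assume $\widetilde{b}$ is self-adjoint, so its Cayley transform $u=(\widetilde{b}-i)(\widetilde{b}+i)^{-1}$ is available.

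For the first inequality (all $p\ge 1$), I would mimic the second part of the proof of Lemma~\ref{lemma-commutator}. The Cayley transform chain reduces the estimate to a bound of the form
$$
\n{u^{*}G_p(\widetilde{x})u-G_p(\widetilde{x})}_{S_E}\le C\,\n{\widetilde{x}}_{S_{E^{(p)}}}^{p-1}\n{u^{*}\widetilde{x}u-\widetilde{x}}_{S_{E^{(p)}}},
$$
which is the self-adjoint analog of Corollary~\ref{cor-Gp-uc-on-positive} applied to $X_1=u^{*}\widetilde{x}u$ and $X_2=\widetilde{x}$. This analog should follow from essentially the same integral-representation argument as in Lemma~\ref{lemma-Gp-uc-on-positive-special-case}, using the identity $G_p(\widetilde{x})=\widetilde{x}(\widetilde{x}^2)^{(p-1)/2}$: the fractional power is taken of the \emph{positive} operator $\widetilde{x}^2$, so the representations $s^\alpha=c_\alpha\int_0^\infty t^\alpha s(s+t)^{-1}\,dt/t$ and their differentiation still apply, while the H\"older inequalities and the unital completely positive contraction furnished by Lemma~\ref{lemma-contractions} carry over with only notational changes.

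For the second inequality ($p\ge 3$) I would instead imitate the first part of the proof of Lemma~\ref{lemma-commutator}. The Cayley transform chain reduces the estimate to the self-adjoint analog of Lemma~\ref{lemma-Gp-inverse-uc-on-positive-elements}, namely
$$
\n{X_1-X_2}_{S_{E^{(p)}}}^{p}\le C'\,\n{G_p(X_1)-G_p(X_2)}_{S_E}\qquad\text{for self-adjoint }X_1,X_2.
$$
For positive operators this is the Bhatia--Kittaneh inequality used in Lemma~\ref{lemma-Gp-inverse-uc-on-positive-elements}, but for general self-adjoint operators the inequality is strictly weaker and does not hold uniformly for all $p\ge 1$. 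The natural way to recover it when $p\ge 3$ is to decompose $X_j=X_j^{+}-X_j^{-}$, apply the positive case to appropriate combinations of the pieces, and absorb the cross-terms coming from the interaction between positive and negative parts via H\"older's inequality for unitarily invariant matrix norms. The threshold $p\ge 3$ is exactly what makes the resulting H\"older exponents admissible. The factor $\n{b}_\infty^{1-1/p}$ in the lemma then drops out of the $\n{(\widetilde{b}+i)^{-1}}_\infty^{2/p}$ term in the Cayley step, exactly as in the first inequality of Lemma~\ref{lemma-commutator}.

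The main obstacle is the second inequality, and within it the self-adjoint extension of Lemma~\ref{lemma-Gp-inverse-uc-on-positive-elements}: the forward direction is essentially a routine upgrade of the integral-representation machinery already developed, whereas the reverse direction genuinely fails for small $p$. Pinning down the precise $p\ge 3$ threshold with clean constants while taming the positive/negative-part cross-terms is where the technical weight of the argument lies.
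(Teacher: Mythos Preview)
Your reduction via $\widetilde{x}=\mathrm{diag}(x,-y)$ correctly turns the anticommutator into a commutator with a self-adjoint element, but what you then need is exactly Lemma~\ref{lemma-commutator-self-adjoint}, which in the paper is proved \emph{using} the present lemma; you are proposing to reverse the logical order of the two. That is legitimate only with an independent proof of the self-adjoint commutator estimates, and here the proposal has a real gap. For the first inequality the integral-representation sketch for $G_p(\widetilde{x})=\widetilde{x}(\widetilde{x}^{2})^{(p-1)/2}$ is plausible, though differentiating the product is more than a notational change. The genuine problem is the second inequality: after the Cayley step you need $\n{X_1-X_2}_{S_{E^{(p)}}}^{p}\le C'\n{G_p(X_1)-G_p(X_2)}_{S_E}$ for general self-adjoint $X_1,X_2$, and your route via $X_j=X_j^{+}-X_j^{-}$ does not deliver this. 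The positive case bounds $\n{(X_1^{+})^p-(X_2^{+})^p}$ and $\n{(X_1^{-})^p-(X_2^{-})^p}$ separately, but since there is no orthogonality between $X_1^{\pm}$ and $X_2^{\mp}$ there is no mechanism to control either of these quantities by their \emph{difference} $G_p(X_1)-G_p(X_2)=\big((X_1^{+})^p-(X_2^{+})^p\big)-\big((X_1^{-})^p-(X_2^{-})^p\big)$. H\"older's inequality does not rescue this, and nothing in the decomposition singles out the threshold $p\ge 3$.

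The paper's argument is entirely different and much shorter. A $2\times 2$ trick with the \emph{positive} block $\mathrm{diag}(x,y)$ reduces to the case $x=y$. The first inequality then follows from the algebraic identity
\[
x^{p}b+bx^{p}=x^{p-1}(xb+bx)+(xb+bx)x^{p-1}-(x^{p-1}bx+xbx^{p-1}),
\]
together with H\"older and the Heinz-type bound \cite[Cor.~IX.4.10]{Bhatia} for the last term. The second inequality is quoted directly from \cite[Thm.~3.1]{Jocic-norm-ineqs}, and that citation is precisely where the restriction $p\ge 3$ originates.
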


\begin{proof}
Once more we assume that $x,y$ and $b$ are finitely supported.
Using the $2\times 2$ trick we may assume $x=y$.
We will be using \cite[Cor. IX.4.10]{Bhatia}:
for any $A,B \in M_n^+$ and $X \in M_n$, and any unitarily invariant norm $\n{\cdot}$, and $\alpha \in [0,1]$,
\begin{equation}\label{eqn-schur}
\Big\| A^{1-\alpha} X B^\alpha + A^\alpha X
B^{1-\alpha}\Big\| \leq  \Big\| AX+XB\Big\|.
\end{equation}
Now, observe that
$$
x^pb+bx^p = x^{p-1}(xb+bx)+(xb+bx)x^{p-1}-(x^{p-1}bx+xbx^{p-1}).
$$
When $p\le 3$,
by the H\"older inequality we get
$$
 \big\| x^pb + bx^p \big\|_{S_E} \le \n{x}_{S_{E^{(p)}}}^{p-1} \Big( 2\Big\| xb + bx\Big\|_{S_{E^{(p)}}} + \Big\| x^{\frac{p-1}{2}}bx^{\frac{3-p}{2}} + x^{\frac{3-p}{2}} bx^{\frac{p-1}{2}}\Big\|_{S_{E^{(p)}}} \Big)
$$
and using \eqref{eqn-schur} with $\alpha = \frac{p-1}{2}$ this yields
$$
 \big\| x^pb + bx^p \big\|_{S_E} \le \n{x}_{S_{E^{(p)}}}^{p-1} 3\Big\| xb + bx\Big\|_{S_{E^{(p)}}}.
$$
When $p>3$, another application of H\"older's inequality gives
$$
\big\| x^{p-1}bx+xbx^{p-1} \big\|_{S_E}
\le 2 \n{x}_{S_{E^{(p)}}}^{p-1} \n{ x^{1/2}bx^{1/2} }_{S_{E^{(p)}}}
$$
and using \eqref{eqn-schur} with $\alpha = 1/2$ we conclude
$$
\big\| x^{p-1}bx+xbx^{p-1} \big\|_{S_E}
\le \n{x}_{S_{E^{(p)}}}^{p-1} \n{xb+bx }_{S_{E^{(p)}}},
$$
and thus we have obtained the first inequality.

The second inequality is an immediate consequence of  \cite[Thm. 3.1]{Jocic-norm-ineqs}, which gives $C_p = 2^{1-1/p}$.
\end{proof}

The next Lemma corresponds to \cite[Lemma 2.6]{Ricard}.

\begin{lemma}\label{lemma-commutator-self-adjoint} 
There is an absolute constant $C>0$ and constants $C_p$ ($p>1$) so that for
 $x\in S_{E^{(p)}}$ with $x=x^*$ and $b\in S_\infty$ we have
\begin{equation}\label{eqn-commutator-self-adjoint-1}
\Big\| \big[G_{p}(x),b\big] \Big\|_{S_E} \leq C \n{x}_{S_{E^{(p)}}}^{p-1} \Big\| [x,b] \Big\|_{S_{E^{(p)}}},
\end{equation}
\begin{equation}\label{eqn-commutator-self-adjoint-2}
\Big\|  [x,b] \Big\|_{S_{E^{(p)}}} \leq C_p
\Big\| \big[G_{p}(x),b\big] \Big\|^{1/p}_{S_E}.
\end{equation}
\end{lemma}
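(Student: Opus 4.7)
The plan is to reduce to the positive case handled by Lemmas \ref{lemma-commutator} and \ref{lemma-sum} via a block decomposition of $b$ adapted to the support structure of $x$. Write $x = x_+ - x_-$ with $x_\pm \ge 0$ orthogonal, let $p_\pm$ be their support projections, set $p_x = p_+ + p_-$ and $q = I - p_x$, and decompose $b = \sum_{i,j \in \{+,-,q\}} b_{ij}$ with $b_{ij} = p_i b p_j$ (identifying $p_q := q$). The piece $b_{qq}$ contributes nothing, since $x$ and $G_p(x)$ both vanish on $q$, so only eight blocks need to be estimated.

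For the diagonal non-kernel blocks, the orthogonality $p_\pm x_\mp = 0$ gives $[G_p(x), b_{++}] = [x_+^p, b_{++}]$ and $[x, b_{++}] = [x_+, b_{++}]$, so Lemma \ref{lemma-commutator} applied to the positive element $x_+$ delivers both inequalities, and analogously for $b_{--}$. For the off-diagonal non-kernel blocks, the same orthogonality yields $[G_p(x), b_{+-}] = x_+^p b_{+-} + b_{+-} x_-^p$ and $[x, b_{+-}] = x_+ b_{+-} + b_{+-} x_-$, which match the hypotheses of Lemma \ref{lemma-sum}: its first part supplies the first inequality with an absolute constant, and its second part supplies the second inequality with a $p$-dependent constant $C_p$ (inheriting the $p \ge 3$ restriction). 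For the boundary blocks such as $b_{+q}$, the identities $[G_p(x), b_{+q}] = x_+^p b_{+q}$ and $[x, b_{+q}] = x_+ b_{+q}$ (using $qx = 0$) allow two direct approaches: for the first inequality, a H\"older estimate $\|x_+^p b_{+q}\|_{S_E} \le \|x_+\|_{S_{E^{(p)}}}^{p-1} \|x_+ b_{+q}\|_{S_{E^{(p)}}}$ obtained by writing $x_+^p b_{+q} = x_+^{p-1} \cdot (x_+ b_{+q})$ and splitting the norms according to $1 = \tfrac{p-1}{p} + \tfrac{1}{p}$; for the second inequality, a $2 \times 2$ antidiagonal trick with $\tilde x_+ = \mathrm{diag}(x_+, 0)$ positive and $\tilde c$ the self-adjoint off-diagonal matrix with entries $b_{+q}$ and $b_{+q}^*$, which reduces the required bound to Lemma \ref{lemma-commutator} applied to $\tilde x_+$. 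The remaining boundary blocks $b_{-q}, b_{q+}, b_{q-}$ are treated symmetrically.

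To combine these estimates, I would use the ideal property and the triangle inequality: each block sits in a distinct position of the matrix, so $\|p_i [x,b] p_j\|_{S_{E^{(p)}}} \le \|[x,b]\|_{S_{E^{(p)}}}$ and similarly for $[G_p(x), b]$, and summing over the eight nonzero blocks costs at most a bounded multiplicative factor. The main obstacle will be the careful treatment of the boundary blocks: the clean algebraic identities available for the interior blocks are replaced by one-sided relations, and the $2 \times 2$ antidiagonal trick must be set up so that the $S_{E^{(p)}}$-norm of $[\tilde x_+, \tilde c]$ is genuinely comparable to $\|x_+ b_{+q}\|_{S_{E^{(p)}}}$, absorbing the doubling of singular values into an absolute constant. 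The constants from Lemma \ref{lemma-sum} propagate the $p$-dependence (and the $p \ge 3$ restriction) into the $C_p$ of the second inequality, while the first inequality retains an absolute constant throughout.
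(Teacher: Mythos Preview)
Your approach is correct and follows the same strategy as the paper: decompose $b$ according to the spectral projections of $x$, then apply Lemma~\ref{lemma-commutator} to the diagonal pieces and Lemma~\ref{lemma-sum} to the off-diagonal ones. The one difference is that you split into three projections $p_+, p_-, q$ (separating the kernel), which forces you to handle four extra ``boundary'' blocks $b_{\pm q}, b_{q\pm}$ with ad hoc H\"older and $2\times 2$ arguments. The paper instead sets $e_+ = 1_{[0,\infty)}(x)$ and $e_- = 1_{(-\infty,0)}(x)$, so that $e_+ + e_- = I$ and the kernel is absorbed into $e_+$; since $x_+ = e_+ x e_+$ is still positive, Lemmas~\ref{lemma-commutator} and~\ref{lemma-sum} apply unchanged, and only the four blocks $b_{\pm,\pm}$ remain. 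Your boundary analysis is not wrong, just unnecessary.
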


\begin{proof}
Once more we may assume that $x$ and $b$ are finitely supported.
For \eqref{eqn-commutator-self-adjoint-1}, write $e_+=1_{[0,\infty)}(x)$ and $e_-=1_{(-\infty,0)}(x)$ and put 
$b_{\pm,\pm}=e_{\pm} be_{\pm}$. Then
$$\big[G_{p} (x),b\big]= \big[x_+^{p},b_{+,+}\big] - \,\big[x_-^{p},b_{-,-}\big]+ \,
\big( x_+^{p}b_{+,-}+b_{+,-}x_-^{p}\big) - \big( x_-^{p}b_{-,+}+b_{-,+}x_+^{p}\big).$$
We can apply either Lemma \ref{lemma-commutator} or \ref{lemma-sum} to each term. 
In any case, because of the ideal property of unitarily invariant norms the upper bound we get is smaller than the right side of \eqref{eqn-commutator-self-adjoint-1}
since
\begin{multline*}
\big[x_+,b_{+,+}\big] = e_+[x,b]e_+, \quad
\big[x_-,b_{-,-}\big] = e_-[x,b]e_-,\\
x_+b_{+,-}+b_{+,-}x_-= e_+( xb+bx )e_-, \quad
x_-b_{-,+}+b_{-,+}x_+= e_-( xb+bx )e_+.
\end{multline*}

A similar argument works for \eqref{eqn-commutator-self-adjoint-2}.
\end{proof}

From the Lemma above and the earlier arguments, we have proved the main result of this section.

\begin{theorem}\label{thm-homeomorphism-convexification}
Let $3 \le p<\infty$, and let $E$ be a 1-symmetric sequence space.
Then the map $G_p$ is a uniform homeomorphism between $\Sph(S_{E^{(p)}})$ and $\Sph(S_E)$.
Moreover, the moduli of continuity of $G_p$ and $G_p^{-1}$ depend only on $p$.
\end{theorem}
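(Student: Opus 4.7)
The plan is to assemble the theorem from Lemma~\ref{lemma-commutator-self-adjoint} together with the two $2\times 2$ tricks already exhibited in the paragraphs preceding that lemma; at this point all the hard analytic work is behind us.

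First, given $x,y\in\Sph(S_{E^{(p)}})$, I would invoke the polar-decomposition $2\times 2$ trick (Remark~\ref{remark-2-by-2-self-adjoint} and the paragraph following Corollary~\ref{cor-Gp-uc-on-positive}) to pass to the self-adjoint elements $\widetilde x,\widetilde y$: the norms $\n{x-y}_{S_{E^{(p)}}}$ and $\n{\widetilde x-\widetilde y}_{S_{E^{(p)}}}$ are comparable up to a factor of $2$, and likewise for $\n{G_p(x)-G_p(y)}_{S_E}$ and $\n{G_p(\widetilde x)-G_p(\widetilde y)}_{S_E}$. This reduces the entire statement to the self-adjoint case at the cost of a universal multiplicative constant.

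Next, I would apply the commutator $2\times 2$ trick with $\widehat x=\mathrm{diag}(\widetilde x,\widetilde y)$ (still self-adjoint) and with $\widehat b$ the $2\times 2$ block matrix having $1$ in the upper right entry and $0$ elsewhere, so that $\n{\widehat b}_\infty=1$. Exactly as set up right before Lemma~\ref{lemma-commutator}, this converts the two differences into commutators, giving $\n{[\widehat x,\widehat b]}_{S_{E^{(p)}}}=\n{\widetilde x-\widetilde y}_{S_{E^{(p)}}}$ and $\n{[G_p(\widehat x),\widehat b]}_{S_E}=\n{G_p(\widetilde x)-G_p(\widetilde y)}_{S_E}$.

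Now Lemma~\ref{lemma-commutator-self-adjoint} applies directly to the self-adjoint $\widehat x$ and the contraction $\widehat b$. The crucial observation is that, since $\widehat x$ is a direct sum of the two self-adjoint elements $\widetilde x,\widetilde y$ whose $S_{E^{(p)}}$-norms are at most $2$, the norm $\n{\widehat x}_{S_{E^{(p)}}}$ is bounded by an absolute constant, so the prefactor $\n{\widehat x}^{p-1}$ appearing in \eqref{eqn-commutator-self-adjoint-1} collapses into a constant depending only on $p$. Then \eqref{eqn-commutator-self-adjoint-1} yields $\n{G_p(x)-G_p(y)}_{S_E}\le C_p\,\n{x-y}_{S_{E^{(p)}}}$, so $G_p$ is in fact Lipschitz on the sphere, while \eqref{eqn-commutator-self-adjoint-2} analogously gives $\n{x-y}_{S_{E^{(p)}}}\le C_p\,\n{G_p(x)-G_p(y)}_{S_E}^{1/p}$, i.e.\ $G_p^{-1}$ is $(1/p)$-H\"older. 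The moduli thus obtained depend only on $p$, since the constants in Lemma~\ref{lemma-commutator-self-adjoint} do and the $2\times 2$ reductions contribute only universal factors.

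The main obstacle was already surmounted in Lemma~\ref{lemma-commutator-self-adjoint} and the estimates feeding into it, which is where the restriction $p\ge 3$ (inherited from Lemma~\ref{lemma-sum}) becomes essential. The remaining task here is purely bookkeeping: tracking how the constants from the two $2\times 2$ reductions and from the bound on $\n{\widehat x}_{S_{E^{(p)}}}$ multiply out, and confirming that the final estimates are independent of the underlying $1$-symmetric sequence space $E$.
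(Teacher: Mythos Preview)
Your proposal is correct and follows exactly the approach the paper intends: the paper's own proof is the single sentence ``From the Lemma above and the earlier arguments, we have proved the main result of this section,'' and your write-up spells out precisely what those earlier arguments are (the two $2\times 2$ reductions feeding into Lemma~\ref{lemma-commutator-self-adjoint}). Your bookkeeping on the constants---in particular the observation that $\n{\widehat x}_{S_{E^{(p)}}}$ is bounded absolutely via the triangle inequality, so that the factor $\n{\widehat x}^{p-1}$ in \eqref{eqn-commutator-self-adjoint-1} becomes a constant depending only on $p$---is the one detail the paper leaves implicit, and you handle it correctly.
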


\begin{remark}
We do not know whether the restriction $p \ge 3$ in  Theorem \ref{thm-homeomorphism-convexification} is actually needed, and we suspect it is not.
As pointed out to the author by D. Carando, as $p \to 1$ the map $G_p$ becomes the identity so intuitively it should be easier for $G_p$ to be a uniform homeomorphism when $p$ is close to $1$.
Note also that the restriction was inherited from \cite[Thm. 3.1]{Jocic-norm-ineqs}, and if the case $1 < p<3$ were to hold for \cite[Thm. 3.1]{Jocic-norm-ineqs} then it would automatically also hold in Theorem \ref{thm-homeomorphism-convexification}.
\end{remark}

\section{The entropy minimization generalized Mazur map}\label{sec-entropy}

Though not explicitly stated in this way, the second generalized Mazur map from \cite{Odell-Schlumprecht} can be defined using the notion of relative entropy from Classical Information Theory. To be precise: given a fixed probability vector $x=(x_j)_{j=1}^n$, they minimize the quantity $-\sum_{j=1}^n x_j \log y_j$ as $y=(y_j)_{j=1}^n$ varies over the positive part of the unit ball of an unconditional norm on $\R^n$.
Since $x$ is fixed, minimizing the above quantity is equivalent to minimizing the relative entropy or Kullback--Leibler divergence $D(x||y) = \sum_{j=1}^n x_j(\log(x_j) - \log(y_j))$.
By using the corresponding notion of quantum relative entropy from Quantum Information Theory, we will define a quantum version of the second generalized Mazur map from \cite{Odell-Schlumprecht}.
Some of our arguments (e.g. Lemma \ref{lemma-uniform-continuity}, Propositions \ref{prop-FX-unif-coninuous on positive} and \ref{prop-G-unif-coninuous}) follow closely those from \cite{Odell-Schlumprecht}, but others (e.g. Lemma \ref{lemma-G-is-F-inverse}) are more specific to quantum entropy. 

Let us start by recalling the definition of quantum relative entropy. 

\begin{definition}
If $\rho \in M_n$ is a state (that is, $\rho \ge 0$ and $\tr(\rho)=1$), and $\sigma \in M_n$ is a positive semi-definite operator, we define their relative entropy
$$
D( \rho || \sigma ) = \begin{cases}
\tr[ \rho(\log \rho - \log \sigma) ] &\text{if } \supp(\rho) \subseteq \supp(\sigma),\\
+\infty &\text{otherwise},
\end{cases}
$$
where the support of an operator $A \in M_n$ is defined as the orthogonal complement of its kernel, $\supp(A) = \ker(A)^\perp$. 
\end{definition}

The reader is referred to \cite{Wilde} for detailed information regarding quantum relative entropy, we will just record in the next proposition the properties we will need the most (see \cite[Prop. 11.8.2, Ex. 11.8.9, and Cor. 11.9.2]{Wilde}).

\begin{proposition}\label{prop-properties-quantum-relative-entropy}
\begin{enumerate}[(a)]
\item Monotonicity: If $\rho \in M_n$ is a state, and $0 \le \sigma \le \sigma'$ in $M_n$, then $D( \rho || \sigma') \le D( \rho || \sigma )$.
\item Multiplication by scalars: If $\rho \in M_n$ is a state, $\sigma\in M_n^+$, and $c>0$, then $D(\rho||c\sigma) =  D(\rho||\sigma) - \log c$.
\item Joint convexity: If $\lambda_1, \dotsc , \lambda_m \ge 0$ are real numbers with $\sum_{j=1}^m \lambda_j = 1$ then
\[
\sum_{j=1}^m \lambda_j D(\rho_j|| \sigma_j) \ge D\Bigg( \sum_{j=1}^m \lambda_j \rho_j  \Bigg|\Bigg| \sum_{j=1}^m \lambda_j \sigma_j \Bigg)
\]
for any states $\rho_1,\dotsc,\rho_m$ and any $\sigma_1,\dotsc,\sigma_m \in M_n^+$.
\end{enumerate}
\end{proposition}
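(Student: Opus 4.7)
The plan is to treat the three parts in order of increasing depth: (b), then (a), then (c).

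Part (b) is a direct computation. Since $\supp(c\sigma) = \supp(\sigma)$ for $c > 0$, the support condition is preserved. On this common support we use the functional-calculus identity $\log(c\sigma) = (\log c)\, I + \log \sigma$ to obtain
\[
D(\rho || c\sigma) = \tr[\rho \log \rho] - \tr[\rho \log(c\sigma)] = D(\rho||\sigma) - (\log c)\tr(\rho) = D(\rho||\sigma) - \log c,
\]
using that $\rho$ is a state.

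For (a), I would first dispose of the support issue: since $\sigma \le \sigma'$ implies $\ker \sigma' \subseteq \ker \sigma$ and thus $\supp(\sigma) \subseteq \supp(\sigma')$, a failure of $\supp(\rho) \subseteq \supp(\sigma)$ already forces $D(\rho||\sigma) = +\infty$ and makes the inequality trivial. In the remaining case both relative entropies are finite, and the key input is the operator monotonicity of the logarithm: $0 < \sigma \le \sigma'$ (read on the common support) implies $\log \sigma \le \log \sigma'$. This is a classical fact that follows from the integral representation
\[
\log t = \int_0^\infty \left( \frac{1}{1+s} - \frac{1}{t+s} \right) ds \qquad (t > 0)
\]
combined with the operator antimonotonicity of $A \mapsto (A+s)^{-1}$. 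Taking trace against the positive operator $\rho$ yields $\tr[\rho \log \sigma] \le \tr[\rho \log \sigma']$, which rearranges to $D(\rho||\sigma) \ge D(\rho||\sigma')$.

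The substantive part is (c). The plan is to reduce joint convexity of relative entropy to Lieb's concavity theorem, which asserts that for any $t \in [0,1]$ the map $(A,B) \mapsto \tr[A^{1-t} B^t]$ is jointly concave on $M_n^+ \times M_n^+$. After approximating by invertible operators to avoid support subtleties, the identity
\[
D(\rho||\sigma) = -\lim_{t \to 0^+} \frac{1}{t}\big( \tr[\rho^{1-t}\sigma^t] - 1 \big)
\]
converts a joint concavity statement for $f(t;\rho,\sigma) = \tr[\rho^{1-t}\sigma^t]$ (with the normalization $f(0;\rho,\sigma) = \tr(\rho) = 1$ constant in $(\rho,\sigma)$) into a joint convexity statement for its negated right derivative at $t=0$, where convex combinations on both sides of a concave inequality agreeing at $t=0$ propagate to the one-sided derivative with the expected reversal of direction. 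The main obstacle is Lieb's concavity itself, which admits several well-known proofs (Epstein's complex-analytic argument, Ando's reduction through $A \otimes \overline{B}$, or routes via operator convexity of $x \mapsto x \log x$), all classical but nontrivial. Since these three facts are standard and thoroughly developed in \cite{Wilde}, my plan matches the author's: state the proposition with explicit pointers to those references rather than reproduce the proofs here.
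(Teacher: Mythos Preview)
Your proposal is correct and in fact goes beyond what the paper does: the paper offers no proof at all, simply recording the three properties with pointers to \cite[Prop.~11.8.2, Ex.~11.8.9, Cor.~11.9.2]{Wilde}, exactly as you note in your final sentence. Your sketches for (b) via functional calculus, (a) via operator monotonicity of the logarithm, and (c) via Lieb's concavity are all standard and sound, so there is nothing to correct.
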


In analogy with \cite{Odell-Schlumprecht}, we define a second noncommutative generalized Mazur map by minimizing the quantum relative entropy.

\begin{definition}
If $X=(M_n, \n{\cdot})$ is a unitarily invariant strictly convex norm and  and $\rho \in M_n$ is a state, we define $F_X(\rho)$ to be the $\sigma \in \Ball(X)^+$ minimizing the relative entropy with respect to $\rho$, that is,
$$
F_{X}(\rho) = \argmin_{\sigma \in \Ball(X)^+} D( \rho || \sigma ) = \argmin_{\sigma \in \Sph(X)^+} D( \rho || \sigma ).
$$
\end{definition}

Notice that minimizing over $\Ball(X)^+$ or over $\Sph(X)^+$ are equivalent because $D(\rho|| c\sigma) = D(\rho||\sigma) - \log c$ for $c>0$, so in particular the relative entropy decreases when $c>1$.
A few other comments are in order about this definition. First notice that for any state $\rho$ there exists $\sigma \in \Sph(X)^+$ such that $D(\rho||\sigma) < \infty$ (for example, $\sigma = \rho/\n{\rho}_X$).
Therefore, for a fixed $\rho$ the set $\big\{ D(\rho||\sigma) \; : \; \sigma \in \Ball(X)^+  \big\}$ will have a finite infimum. Furthermore, this infimum will be achieved by compactness so it is in fact a minimum.

For the uniqueness, let $\sigma_1, \sigma_2 \in \Sph(X)^+$ both minimize $D(\rho|| \cdot )$ over $\Ball(X)^+$.
From the joint convexity of quantum relative entropy
\[
D\big(\rho|| \tfrac{1}{2}(\sigma_1+\sigma_2)\big)
\le \tfrac{1}{2} \big[ D(\rho|| \sigma_1 ) + D(\rho|| \sigma_2 ) \big] = D(\rho|| \sigma_1 ) \le D\big(\rho|| \tfrac{1}{2}(\sigma_1+\sigma_2)\big),
\]
implying that $\tfrac{1}{2}(\sigma_1+\sigma_2)$ is also a minimizer over $\Ball(X)^+$ and therefore is also on the unit sphere, so by strict convexity $\sigma_1=\sigma_2$.

The next lemma is a technical calculation that will be needed in the subsequent result.

\begin{lemma}\label{lemma-logs-are-close}
Let $A, B \in M_n^+$ and $\varepsilon\in(0,1)$.
Then
$$
\n{\log(A+\varepsilon B) - \log(B+\varepsilon A)}_\infty \le |\log \varepsilon|.
$$
\end{lemma}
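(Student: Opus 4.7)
The plan is to exploit the fact that $A + \varepsilon B$ and $B + \varepsilon A$ dominate scalar multiples of one another in the positive semidefinite order, and then push this through operator monotonicity of the logarithm.

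First I would reduce to the case where both $A$ and $B$ are invertible by replacing them with $A + \delta I$ and $B + \delta I$ for $\delta > 0$ and letting $\delta \to 0^+$, using continuity of the functional calculus. Once invertibility is in hand, $A + \varepsilon B$ and $B + \varepsilon A$ are both strictly positive, so their logarithms are well-defined self-adjoint matrices.

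The key observation is the chain of PSD inequalities
\[
\varepsilon(B + \varepsilon A) = \varepsilon B + \varepsilon^2 A \le \varepsilon B + A = A + \varepsilon B,
\]
where the inequality uses $\varepsilon^2 \le 1$ and $A \ge 0$. Rewriting this gives $B + \varepsilon A \le \varepsilon^{-1}(A + \varepsilon B)$. Since the logarithm is operator monotone on positive matrices, applying it to this inequality and using $\log(cX) = \log X + (\log c) I$ for $c > 0$ yields
\[
\log(B + \varepsilon A) \le \log(A + \varepsilon B) + (\log \varepsilon^{-1}) I = \log(A + \varepsilon B) + |\log \varepsilon|\, I,
\]
i.e.\ $\log(B+\varepsilon A) - \log(A+\varepsilon B) \le |\log\varepsilon|\, I$. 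Swapping the roles of $A$ and $B$ (the argument is completely symmetric) produces the reverse inequality
\[
\log(A + \varepsilon B) - \log(B + \varepsilon A) \le |\log \varepsilon|\, I.
\]

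Combining the two bounds gives $-|\log \varepsilon|\, I \le \log(A+\varepsilon B) - \log(B+\varepsilon A) \le |\log \varepsilon|\, I$, and since the self-adjoint matrix in the middle has all its eigenvalues sandwiched in $[-|\log\varepsilon|, |\log\varepsilon|]$, this is equivalent to the operator norm bound $\|\log(A+\varepsilon B) - \log(B+\varepsilon A)\|_\infty \le |\log \varepsilon|$. There is no real obstacle here; the only thing one must be careful about is operator monotonicity of $\log$ (which requires matrices on both sides of the inequality to be positive semidefinite), and this is guaranteed by the invertibility reduction.
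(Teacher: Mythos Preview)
Your proof is correct and follows essentially the same route as the paper: both establish the two-sided PSD sandwich $\varepsilon(B+\varepsilon A) \le A+\varepsilon B \le \varepsilon^{-1}(B+\varepsilon A)$ and then invoke operator monotonicity of the logarithm. The only cosmetic difference is that the paper avoids the perturbation-to-invertible step and instead works directly with the projection $P$ onto the common support of $A+\varepsilon B$ and $B+\varepsilon A$, writing $(\log\varepsilon)P \le \log(A+\varepsilon B)-\log(B+\varepsilon A) \le (\log\tfrac{1}{\varepsilon})P$; your limiting argument accomplishes the same thing (the divergent parts of the individual logarithms cancel on the common kernel), though you might want to say a word about why the \emph{difference} of the logs survives the limit even when the individual logs do not.
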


\begin{proof}
Note that $$
\varepsilon(B + \varepsilon A) \le A + \varepsilon B \le \frac{1}{\varepsilon}(B + \varepsilon A),
$$
from where by the operator monotonicity of the logarithm \cite[Thm. 2.6]{Carlen}
$$
\log(\varepsilon(B + \varepsilon A)) \le \log( A + \varepsilon B) \le \log( \frac{1}{\varepsilon}(B + \varepsilon A)),
$$
thus
$$
(\log \varepsilon) P \le \log( A + \varepsilon B) - \log(B + \varepsilon A) \le (\log \frac{1}{\varepsilon}) P
$$
where $P$ is the orthogonal projection onto the support of $B + \varepsilon A$.
The conclusion follows.
\end{proof}

The following is now a quantum version of \cite[Lemma 2.3.(B)]{Odell-Schlumprecht}.

\begin{lemma}\label{lemma-uniform-continuity}
Let $X=(M_n, \n{\cdot})$ be a unitarily invariant
strictly convex
norm on $M_n$.
For any states $\rho_1, \rho_2$ in $M_n$ with $\n{\rho_1 - \rho_2}_1 \le 1$ we have that 
$$
\n{\tfrac{1}{2}\big( F_{X}(\rho_1) + F_{X}(\rho_2) \big)}_X \ge 1 - \sqrt{\n{\rho_1 - \rho_2}_1}.
$$
\end{lemma}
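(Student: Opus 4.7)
The strategy is to combine the minimality of $\sigma_i := F_X(\rho_i)$ with operator concavity of $\log$ to derive a scalar inequality on $\tr[(\rho_1-\rho_2)(\log\sigma_1 - \log\sigma_2)]$, and then upper-bound the same quantity via the regularization provided by Lemma~\ref{lemma-logs-are-close}. Throughout, set $\tau = \tfrac12(\sigma_1 + \sigma_2)$, $\alpha = \|\tau\|_X$, and $\delta = \|\rho_1-\rho_2\|_1 \le 1$. We may assume $\alpha < 1$; then $\tau/\alpha \in \Sph(X)^+$ is a legitimate competitor in the variational problem defining each $\sigma_i$.

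For the lower bound, minimality of $\sigma_i$ applied to $\tau/\alpha$, combined with the scalar rule $D(\rho\|c\sigma) = D(\rho\|\sigma) - \log c$ from Proposition \ref{prop-properties-quantum-relative-entropy}(b), gives
\[
D(\rho_i\|\sigma_i) \le D(\rho_i\|\tau) + \log\alpha.
\]
Joint convexity (Proposition \ref{prop-properties-quantum-relative-entropy}(c)) with $\rho_i$ in both slots of the first argument yields $D(\rho_i\|\tau) \le \tfrac12 D(\rho_i\|\sigma_1) + \tfrac12 D(\rho_i\|\sigma_2)$. Subtracting and simplifying gives $D(\rho_i\|\sigma_i) - D(\rho_i\|\sigma_{3-i}) \le 2\log\alpha$; summing over $i = 1,2$ and using $D(\rho_i\|\sigma_j) - D(\rho_i\|\sigma_k) = \tr[\rho_i(\log\sigma_k - \log\sigma_j)]$ produces the key inequality
\[
\tr\bigl[(\rho_1-\rho_2)(\log\sigma_1 - \log\sigma_2)\bigr] \;\ge\; 4\log(1/\alpha). \tag{$\star$}
\]

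For the upper bound on the left-hand side of ($\star$), the naive Hölder estimate $\le \delta\|\log\sigma_1 - \log\sigma_2\|_\infty$ is useless because the operator norm of the log-difference is not controlled by $\delta$. We regularize: fix $\varepsilon \in (0,1)$ and set $\hat\sigma_i = \sigma_i + \varepsilon\sigma_{3-i}$, so by Lemma \ref{lemma-logs-are-close} we have $\|\log\hat\sigma_1 - \log\hat\sigma_2\|_\infty \le |\log\varepsilon|$. Writing
\[
\log\sigma_1 - \log\sigma_2 = (\log\hat\sigma_1 - \log\hat\sigma_2) - (\log\hat\sigma_1 - \log\sigma_1) + (\log\hat\sigma_2 - \log\sigma_2),
\]
Hölder gives $\tr[(\rho_1-\rho_2)(\log\hat\sigma_1-\log\hat\sigma_2)] \le \delta|\log\varepsilon|$. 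The correction traces $\tr[\rho_k(\log\hat\sigma_i - \log\sigma_i)] = D(\rho_k\|\sigma_i) - D(\rho_k\|\hat\sigma_i)$ are non-negative by the monotonicity of Proposition \ref{prop-properties-quantum-relative-entropy}(a), and when $k = i$ they are at most $\log(1+\varepsilon)$ thanks to the minimality of $\sigma_i$ against the competitor $\hat\sigma_i/\|\hat\sigma_i\|_X \in \Sph(X)^+$ together with $\|\hat\sigma_i\|_X \le 1+\varepsilon$. Combining these estimates with $(\star)$ and optimizing $\varepsilon$ (of the order of $\delta$) should give $\log(1/\alpha) \le \sqrt{\delta}$, and hence $\alpha \ge 1-\sqrt{\delta}$ via $1 - e^{-t} \le t$.

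The hardest part is controlling the cross correction terms $\tr[\rho_k(\log\hat\sigma_i - \log\sigma_i)]$ with $k \ne i$, since the minimality of $\sigma_i$ is tied to $\rho_i$ rather than $\rho_k$. The plan here is to apply minimality of $\sigma_k$ against the competitor $\hat\sigma_i/\|\hat\sigma_i\|_X$, which gives $D(\rho_k\|\hat\sigma_i) \ge D(\rho_k\|\sigma_k) - \log(1+\varepsilon)$, and to pair this with the bound $D(\rho_k\|\sigma_i) - D(\rho_k\|\sigma_k) \le -2\log\alpha$ already derived in the first step. Avoiding circularity, and tracking constants carefully so that the final bound is $1 - \sqrt{\delta}$ with the stated coefficient, are the delicate technical points of this approach.
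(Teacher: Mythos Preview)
Your outline has the right ingredients (minimality, joint convexity, and the regularization from Lemma~\ref{lemma-logs-are-close}), but the order in which you deploy them creates the difficulty you yourself flag at the end, and that difficulty is real. By first deriving $(\star)$ with the \emph{unregularized} $\sigma_i$ and only afterwards introducing $\hat\sigma_i$, you must control the cross correction terms $D(\rho_k\|\sigma_i)-D(\rho_k\|\hat\sigma_i)$ for $k\ne i$. Your suggested plan for these does not close: minimality of $\sigma_k$ gives only a \emph{lower} bound on $D(\rho_k\|\hat\sigma_i)$, not the upper bound on the difference that you need, and in any case $D(\rho_k\|\sigma_i)$ may well be $+\infty$ (the same support issue already threatens the derivation of $(\star)$ itself, since $D(\rho_i\|\sigma_{3-i})$ need not be finite).

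The paper avoids all of this by regularizing \emph{before} running the variational argument, and by tying the regularization parameter to the quantity being estimated rather than treating it as a free variable to optimize at the end. With $A_i=F_X(\rho_i)$, define $\varepsilon$ by $\big\|\tfrac12(A_1+A_2)\big\|_X=1-2\varepsilon$ and set $\widetilde A_i=A_i+\varepsilon A_{3-i}$. Then $\big\|\tfrac12(\widetilde A_1+\widetilde A_2)\big\|_X\le 1-\varepsilon$, all the quantities $D(\rho_i\|\widetilde A_j)$ are finite, and one chains monotonicity $D(\rho_i\|\widetilde A_i)\le D(\rho_i\|A_i)$ with minimality of $A_i$ against the competitor $\tfrac1{2(1-\varepsilon)}(\widetilde A_1+\widetilde A_2)$ and joint convexity (exactly as in your first paragraph, but with $\widetilde A_j$ in place of $\sigma_j$ throughout). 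This yields directly
\[
\varepsilon \;\le\; \tfrac14\,\tr\big[(\rho_1-\rho_2)(\log\widetilde A_1-\log\widetilde A_2)\big] \;\le\; \tfrac14\,\delta\,|\log\varepsilon| \;\le\; \frac{\delta}{4\varepsilon},
\]
hence $2\varepsilon\le\sqrt\delta$, with no correction terms at all.
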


\begin{proof}
Let $A_1 = F_{X}(\rho_1)$, $A_2 = F_{X}(\rho_2)$, and let $\varepsilon$ be defined by $\n{\tfrac{1}{2}\big( A_1 + A_2 \big)} = 1 - 2\varepsilon$.
If $A_1=A_2$ there is nothing to prove since the left-hand side of the desired inequality is then equal to 1, so we may assume $A_1\not=A_2$ which implies $0<\varepsilon<1$.
Set
$$
\widetilde{A}_1 = A_1 + \varepsilon A_2, \qquad \widetilde{A}_2 = A_2 + \varepsilon A_1.
$$
Note that $\big\| \tfrac{1}{2}\big( \widetilde{A}_1 + \widetilde{A}_2 \big) \big\| \le 1 - \varepsilon$, and by Lemma \ref{lemma-logs-are-close} we have
$\big\| \log\widetilde{A}_1-\log\widetilde{A}_2 \big\|_\infty \le |\log \varepsilon|$.
Moreover, note that $\ker \widetilde{A}_j = (\ker A_1) \cap (\ker A_2)$, and therefore $\supp(A_i) \subseteq \supp(\widetilde{A}_j)$.
This implies that the quantities $D(\rho_i||\widetilde{A}_j)$ are all finite.

By monotonicity of quantum relative entropy,
together with the optimality in the definition of $F_X(\rho_1)$, and
using the behavior of quantum relative entropy under multiplication by scalars,
\begin{multline*}
D( \rho_1 || \widetilde{A}_1 ) \le D( \rho_1 || A_1 ) \le D\Big( \rho_1 || \tfrac{1}{2(1-\varepsilon)}( \widetilde{A}_1+ \widetilde{A}_2 ) \Big)\\
=  D\Big( \rho_1 || \tfrac{1}{2}( \widetilde{A}_1+ \widetilde{A}_2 ) \Big) + \log(1-\varepsilon).
\end{multline*}
which implies, now using convexity of the quantum relative entropy,
\begin{multline*}
\varepsilon \le |\log(1-\varepsilon)| \le D\Big( \rho_1 || \tfrac{1}{2}( \widetilde{A}_1+ \widetilde{A}_2 ) \Big) - D( \rho_1 || \widetilde{A}_1 ) 
\\
\le \tfrac{1}{2} D(\rho_1||\widetilde{A}_1) + \tfrac{1}{2} D(\rho_1||\widetilde{A}_2) - D( \rho_1 || \widetilde{A}_1 )
= -\tfrac{1}{2} D(\rho_1||\widetilde{A}_1) + \tfrac{1}{2} D(\rho_1||\widetilde{A}_2).
\end{multline*}
Similarly,
$$
\varepsilon \le -\tfrac{1}{2} D(\rho_2||\widetilde{A}_2) + \tfrac{1}{2} D(\rho_2||\widetilde{A}_1).
$$
Averaging both inequalities,
\begin{multline*}
\varepsilon \le \tfrac{1}{4}\big[ - D(\rho_1||\widetilde{A}_1) +  D(\rho_1||\widetilde{A}_2) - D(\rho_2||\widetilde{A}_2) + D(\rho_2||\widetilde{A}_1) \big] \\
= \tr\big[ (\rho_1-\rho_2)\big( \log\widetilde{A}_1-\log\widetilde{A}_2 \big)  ]
\le
\n{\rho_1-\rho_2}_1 \n{ \log\widetilde{A}_1-\log\widetilde{A}_2 }_\infty \\
\le |\log \varepsilon| \n{\rho_1-\rho_2}_1 \le \frac{1}{\varepsilon} \n{\rho_1-\rho_2}_1,
\end{multline*}
from where the desired result follows.
\end{proof}

The following is a first step towards a noncommutative version of \cite[Prop. 2.6]{Odell-Schlumprecht}.

\begin{proposition}\label{prop-FX-unif-coninuous on positive}
Let $X=(M_n, \n{\cdot})$ be an unitarily invariant matrix norm which is uniformly convex.
Then $F_X : \Sph(S_1^n)^+ \to \Sph(X)^+$ is uniformly continuous; moreover, the modulus of continuity of $F_X$ depends only on the modulus of uniform convexity of $X$.
\end{proposition}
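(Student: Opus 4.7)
The plan is to combine Lemma \ref{lemma-uniform-continuity} with the definition of the modulus of uniform convexity of $X$; essentially all the analytic content of the proposition is already packaged in that lemma, and what remains is a contrapositive argument.

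First I would fix $\varepsilon \in (0,2)$ and take two states $\rho_1, \rho_2 \in \Sph(S_1^n)^+$ with $\|\rho_1 - \rho_2\|_1 \le 1$ (the case $\|\rho_1-\rho_2\|_1 > 1$ is irrelevant for uniform continuity, since we only care about small distances). Applying Lemma \ref{lemma-uniform-continuity}, the minimizers $F_X(\rho_1), F_X(\rho_2) \in \Sph(X)^+$ satisfy
\[
\Big\| \tfrac{1}{2}\big(F_X(\rho_1) + F_X(\rho_2)\big) \Big\|_X \ge 1 - \sqrt{\|\rho_1 - \rho_2\|_1}.
\]

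Next, suppose for contradiction that $\|F_X(\rho_1) - F_X(\rho_2)\|_X \ge \varepsilon$. Since both $F_X(\rho_1)$ and $F_X(\rho_2)$ lie on $\Sph(X)$, the definition of the modulus of uniform convexity $\delta_X$ of $X$ forces
\[
\Big\| \tfrac{1}{2}\big(F_X(\rho_1) + F_X(\rho_2)\big) \Big\|_X \le 1 - \delta_X(\varepsilon).
\]
Chaining these two bounds yields $\sqrt{\|\rho_1-\rho_2\|_1} \ge \delta_X(\varepsilon)$, i.e., $\|\rho_1-\rho_2\|_1 \ge \delta_X(\varepsilon)^2$. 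Taking the contrapositive, if we set $\eta(\varepsilon) = \min\{1, \delta_X(\varepsilon)^2\}$, then whenever $\|\rho_1-\rho_2\|_1 < \eta(\varepsilon)$ we necessarily have $\|F_X(\rho_1) - F_X(\rho_2)\|_X < \varepsilon$. Since $\eta$ depends only on $\delta_X$, this establishes both the uniform continuity of $F_X$ on $\Sph(S_1^n)^+$ and the claim about the modulus of continuity.

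The main (and only) obstacle was building Lemma \ref{lemma-uniform-continuity} in the first place; once it is available, this proposition follows from a two-line comparison of averages. The only subtlety worth double-checking is that the argument truly uses both $F_X(\rho_1)$ and $F_X(\rho_2)$ lying on $\Sph(X)$ (not merely in $\Ball(X)$), which is a consequence of the scaling property $D(\rho\|c\sigma) = D(\rho\|\sigma) - \log c$ that already forced the minimizers onto the unit sphere in the discussion following the definition of $F_X$.
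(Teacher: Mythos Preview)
Your proof is correct and follows essentially the same route as the paper: apply Lemma \ref{lemma-uniform-continuity}, compare with the definition of $\delta_X$, and take the contrapositive to obtain $\eta(\varepsilon)=\delta_X(\varepsilon)^2$ (the paper then inverts $\eta$ to express the modulus of continuity explicitly, but this is cosmetic). Your added $\min\{1,\cdot\}$ is a harmless way of absorbing the hypothesis $\|\rho_1-\rho_2\|_1\le 1$ needed to invoke the lemma.
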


\begin{proof}
Note that from Lemma \ref{lemma-uniform-continuity} we get that for any  $\rho_1, \rho_2 \in \Sph(S_1^n)^+$ with $\n{\rho_1 - \rho_2}_1 \le 1$ we have
$$
\sqrt{\n{\rho_1 - \rho_2}_1} \ge 1 - \n{\tfrac{1}{2}\big( F_{X}(\rho_1) + F_{X}(\rho_2) \big)}_X \ge \delta_X\big( \n{F_{X}(\rho_1) - F_{X}(\rho_2)}_X\big).
$$
Therefore, whenever $\n{F_{X}(\rho_1) - F_{X}(\rho_2)}_X \ge \varepsilon$, we must have $\n{\rho_1 - \rho_2}_1 \ge \big( \delta_X(\varepsilon)\big)^2$.
Equivalently, there exists $\eta(\varepsilon) = \big( \delta_X(\varepsilon)\big)^2$ such that
$\n{\rho_1 - \rho_2}_1< \eta(\varepsilon)$ implies $\n{F_{X}(\rho_1) - F_{X}(\rho_2)}_X < \varepsilon$.
Letting $\eta(0)=0$, the function $\eta$ is continuous and strictly increasing on $[0,2]$.
Therefore $\eta$ has an inverse $g$, depending only on the modulus of convexity of $X$, such that $$
\n{F_{X}(\rho_1) - F_{X}(\rho_2)}_X \le g\big( \n{\rho_1 - \rho_2}_1 \big),
$$
which proves the desired result.
\end{proof}

In order to study the uniform continuity of $F_X^{-1}$, it will be convenient to develop an alternative description of it in terms of the duality map.
Note that such an alternative description is well-known in the classical case, in fact \cite[Chap. 9]{Benyamini-Lindenstrauss} uses only the duality map description instead of looking at entropy minimization.

\begin{proposition}\label{prop-G-unif-coninuous}
Let $X=(M_n, \n{\cdot})$ be an unitarily invariant matrix norm which is uniformly smooth.
Define $G : X \to S_1^n$ by $G(A) = |J_X(A)|A$.
Then $G$ is uniformly continuous on $\Sph(X)$, and its modulus of uniform continuity depends only on the modulus of uniform smoothness of $X$.
Moreover, when $A \in \Sph(X)^+$ we have $G(A) \in \Sph(S_1^n)^+$.
\end{proposition}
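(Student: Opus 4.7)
The positivity claim is immediate from Lemma \ref{lemma-G-gives-states}: for $A \in \Sph(X)^+$ one has $J_X(A) \in \Sph(X^*)^+$, hence $|J_X(A)| = J_X(A)$, and $G(A) = J_X(A)A = AJ_X(A) \in \Sph(S_1^n)^+$. For the uniform continuity on $\Sph(X)$ I would imitate the classical Odell--Schlumprecht scheme: given $A, B \in \Sph(X)$, I split
\[
G(A) - G(B) = |J_X(A)|(A-B) + \bigl(|J_X(A)|-|J_X(B)|\bigr) B,
\]
and apply the trace-duality form of H\"older's inequality $\|CD\|_1 \le \|C\|_{X^*}\|D\|_X$ (immediate from the definition of the dual norm in Proposition \ref{prop-unitarily-invariant-matrix-norm-basics}.(c)) to each summand. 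Using $\|J_X(A)\|_{X^*} = \|B\|_X = 1$, this yields
\[
\|G(A)-G(B)\|_1 \le \|A-B\|_X + \bigl\| |J_X(A)|-|J_X(B)| \bigr\|_{X^*}.
\]

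Next I would invoke the uniform smoothness of $X$ to get uniform continuity of the duality map $J_X : \Sph(X) \to \Sph(X^*)$, with modulus depending only on the modulus of uniform smoothness of $X$; this is a standard Banach-space fact (see \cite[Chap. 9]{Benyamini-Lindenstrauss}). Composing this with a dimension-free continuity estimate for the operator absolute value on bounded sets of $(M_n, \|\cdot\|_{X^*})$---for instance a Bhatia--Kittaneh inequality of the form $\||S|-|T|\|^2 \le 2\|S-T\|\cdot\|S+T\|$ available for every unitarily invariant norm, or a reduction to the self-adjoint case via the $2\times 2$ trick of Remark \ref{remark-2-by-2-self-adjoint} followed by an Ando-type bound $\|S_+-T_+\| \le \|S-T\|$ for positive parts---would control $\bigl\||J_X(A)|-|J_X(B)|\bigr\|_{X^*}$ by a function of $\|A-B\|_X$ depending only on the modulus of uniform smoothness of $X$, thereby completing the argument.

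The main obstacle is precisely this last step. In the commutative version the analogous bound $\bigl\||J_E(x)|-|J_E(y)|\bigr\|_{E^*} \le \|J_E(x)-J_E(y)\|_{E^*}$ is automatic from the pointwise inequality $||a|-|b||\le|a-b|$ together with the 1-unconditionality of $E^*$, but noncommutatively the operator absolute value fails to be a contraction in a general unitarily invariant norm, so one has to rely on a genuine dimension-free perturbation inequality for $|\cdot|$ in order to keep the final modulus of continuity dependent only on the modulus of uniform smoothness of $X$ and not on the dimension $n$.
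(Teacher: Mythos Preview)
Your approach is essentially identical to the paper's: the same splitting $G(A)-G(B)=|J_X(A)|(A-B)+(|J_X(A)|-|J_X(B)|)B$, the same H\"older/duality bound giving $\|G(A)-G(B)\|_1\le \|A-B\|_X+\||J_X(A)|-|J_X(B)|\|_{X^*}$, the same appeal to uniform smoothness for the uniform continuity of $J_X$ (the paper cites \cite[Prop.~A.5]{Benyamini-Lindenstrauss}), and Lemma~\ref{lemma-G-gives-states} for the positivity claim. The paper disposes of your ``main obstacle'' in one line: since $X^*$ is again unitarily invariant, the absolute value is uniformly continuous on $\Sph(X^*)$ with a \emph{universal} modulus by \cite[Thm.~X.2.1]{Bhatia}. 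So the dimension-free perturbation inequality you are looking for is already in the literature, and your first candidate (a Bhatia--Kittaneh-type estimate) is exactly the right direction.

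One caution about your second candidate: the inequality $\|S_+-T_+\|\le\|S-T\|$ for self-adjoint $S,T$ does \emph{not} hold in every unitarily invariant norm. Indeed, since $|S|=2S_+-S$ for self-adjoint $S$, a Lipschitz bound for $S\mapsto S_+$ would immediately yield one for $S\mapsto|S|$, and it is classical (Kato) that the absolute value is not Lipschitz in the operator norm. So that route cannot work without dimension dependence; you need a genuinely H\"older-type estimate such as the one cited from Bhatia.
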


\begin{proof}
For simplicity we write $J$ instead of $J_X$.
This will be an easy triangle inequality argument analogous to the one in \cite{Odell-Schlumprecht}: for $A,B \in \Sph(X)$,
\begin{multline*}
\n{G(A)-G(B)}_1 \le \n{ |J(A)|\big(A-B\big) }_1 + \n{\big(|J(A)|-|J(B)|\big)B}_1 \\
\le \n{|J(A)|}_{X^*} \n{A-B}_X + \n{|J(A)|-|J(B)|}_{X^*}\n{B}_X   \\
\le \n{A-B}_X + \n{|J(A)|-|J(B)|}_{X^*}.
\end{multline*}
Since $X$ is uniformly smooth, $J$ is uniformly continuous on $\Sph(X)$ and its modulus of uniform continuity depends only on the modulus of uniform smoothness of $X$ \cite[Prop. A.5]{Benyamini-Lindenstrauss}.
Since $X$ is unitarily invariant so is $X^*$, and therefore the absolute value is uniformly continuous on $\Sph(X^*)$ with a universal modulus of uniform continuity \cite[Thm. X.2.1]{Bhatia}.
It is now clear that $G$ is uniformly continuous, and its modulus of uniform continuity depends only on the modulus of uniform smoothness of $X$.
The fact that when $A \in \Sph(X)^+$ we have $G(A) \in \Sph(S_1^n)^+$ follows from Lemma \ref{lemma-G-gives-states}.
\end{proof}

Next we prove that $G$ is in fact the inverse of $F_X$. Our approach is inspired by standard techniques in convexity and quantum entropy as in \cite{Carlen}.

\begin{lemma}\label{lemma-G-is-F-inverse}
Let $X=(M_n, \n{\cdot})$ be an unitarily invariant matrix norm which is uniformly convex and uniformly smooth.
Then for any state $\rho \in \Sph(S_1^n)^+$ we have $G(F_X(\rho))  = \rho$, and for any matrix $A \in \Sph(X)^+$ we have $F_X(G(A)) = A$.
\end{lemma}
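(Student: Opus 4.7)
The plan is to reduce both identities to their classical counterparts by exploiting unitary invariance. Set $\sigma = F_X(\rho)$. The first step is to show that $\sigma$ commutes with $\rho$. For any unitary $U$ commuting with $\rho$, cyclicity of the trace gives $\tr[\rho\, U(\log\sigma)U^*] = \tr[U^*\rho U \log\sigma] = \tr[\rho\log\sigma]$, so $D(\rho||U\sigma U^*) = D(\rho||\sigma)$; since $U\sigma U^* \in \Sph(X)^+$ by unitary invariance of $X$, it is also a minimizer, and by the uniqueness established after the definition of $F_X$ we must have $U\sigma U^* = \sigma$. Taking $U = e^{it\rho}$ and differentiating at $t=0$ then yields $[\rho,\sigma]=0$.

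Next I diagonalize $\sigma$ and $\rho$ in a common orthonormal basis. Writing $E$ for the $1$-symmetric sequence space associated with $X$, the diagonal entries $\mu = (\mu_j)$ of $\sigma$ and $\lambda = (\lambda_j)$ of $\rho$ satisfy $\n{\mu}_E = 1$, $\sum_j \lambda_j = 1$, and $D(\rho||\sigma) = \sum_j \lambda_j \log(\lambda_j/\mu_j)$. Since the unrestricted minimizer $\sigma$ already commutes with $\rho$, it is also feasible (and hence optimal) for the purely commutative problem of minimizing this classical Kullback--Leibler divergence over $\mu \geq 0$ with $\n{\mu}_E \leq 1$---precisely the optimization underlying \cite[Prop. 2.6]{Odell-Schlumprecht}, whose proof characterizes the minimizer by the pointwise relation $\lambda_j = \mu_j \cdot (J_E(\mu))_j$.

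To lift this back to matrices, Lemma \ref{lemma-G-gives-states} shows that $J_X(\sigma) \geq 0$ is diagonal in the same basis as $\sigma$, with entries $J_E(\mu)$; therefore $|J_X(\sigma)|\sigma = J_X(\sigma)\sigma$ is diagonal with entries $\mu_j (J_E(\mu))_j = \lambda_j$, proving $G(F_X(\rho)) = G(\sigma) = \rho$. The second identity follows at once: for $A \in \Sph(X)^+$, the matrix $\rho := G(A)$ is a state by Proposition \ref{prop-G-unif-coninuous}, and the first identity gives $G(F_X(\rho)) = \rho = G(A)$, which by Lemma \ref{lemma-unique-factorization} forces $F_X(\rho) = A$. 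The main technical point to handle carefully is the rank-deficient case: when $\rho$ has a nontrivial kernel one should verify that the minimizer $\sigma$ vanishes on $\supp(\rho)^\perp$, which is consistent with optimality since any mass of $\sigma$ there only wastes norm budget without improving the entropy.
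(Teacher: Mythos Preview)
Your argument is correct and follows a genuinely different path from the paper's. The paper proceeds by convex separation: it separates the sublevel set $\{C \in M_n^+ : D(\rho||C) < D(\rho||A)\}$ from $\Ball(X)^+$ by a hyperplane, identifies that hyperplane as the one determined by $B=J_X(A)$, and then computes the derivative of $t\mapsto -\tr[\rho\log(A+tD)]$ along Hermitian directions $D$ tangent to the hyperplane to extract the relation $BA=\rho$. Your route instead leverages unitary invariance to force commutativity: the minimizer $\sigma$ is fixed by conjugation by every unitary in the commutant of $\rho$, hence $[\rho,\sigma]=0$, and after simultaneous diagonalization the problem collapses to the scalar Odell--Schlumprecht optimization, whose first-order condition is precisely $\lambda_j = \mu_j\,(J_E(\mu))_j$. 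Your approach is more elementary and makes the reduction to the classical case completely transparent; the paper's approach is self-contained (it does not lean on the commutative result as a black box) but requires a more delicate first-variation computation for the matrix logarithm. One minor remark: rather than the heuristic in your last sentence about the rank-deficient case, it is cleaner to first treat full-rank $\rho$ and then pass to general states by density, using the uniform continuity of $F_X$ from Proposition~\ref{prop-FX-unif-coninuous on positive} and of $G$ from Proposition~\ref{prop-G-unif-coninuous}---exactly as the paper does in its opening line.
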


\begin{proof}
  By density, we can assume that $\rho$ has full support.
 Let $A= F_X(\rho)$, that is, $A$ minimizes $D(\rho|| \cdot )$ over $\Ball(X)^+$; as pointed out above, in fact $A \in \Sph(X)^+$.
Notice that since $\rho$ has full support, so must $A$ (i.e. $A$ is invertible).
 
Consider the set 
$$
\mathcal{F} = \{ C \in M_n^+ \; : \; D(\rho||C) < D(\rho||A) \}.
$$
By the joint convexity of quantum relative entropy,
the set $\mathcal{F}$ is convex.
Since $\mathcal{F}$ is disjoint from the set $\Ball(X)^+$, they can be separated by a hyperplane $H$.
But $A$ belongs to both the boundary of $\Ball(X)^+$ and the closure of $\mathcal{F}$, so the hyperplane must be a supporting hyperplane for $\Ball(X)^+$ at $A$ and thus of the form $\{ C \in M_n \; : \; \tr(B C) = 1\}$ for $B=J(A)$.
Note that in particular we have $\tr(BA)=1$, $\tr(BA') \le 1$ for every $A'\in \Ball(X)^+$, and $B \ge 0$. 

Let $D \in M_n$ be a Hermitian matrix such that $\tr(BD)=0$.
For values of $t$ small enough,
observe that the matrix $A + tD$ is still positive definite and moreover it belongs to the hyperplane $H$, and therefore $D(\rho||A+tD) \ge D(\rho||A)$.
It follows that the function $f(t) = -\tr( \rho \log(A+tD) )$ has a minimum at $t=0$, and therefore its derivative at $t=0$ is equal to $0$.
Spectral calculus together with a power series representation for the logarithm then yields
$$
0 = f'(0) = \tr( \rho A^{-1} D ).
$$  
But $D$ was an arbitrary Hermitian matrix satisfying $\tr(BD)=0$, which implies $B = \rho A^{-1}$ and therefore $BA=\rho$, that is, $G(F_X(\rho))=\rho$.  

If now $A \in \Sph(X)^+$ is arbitrary, since $G(A)$ is a state by Proposition \ref{prop-G-unif-coninuous}, it follows from the first part that $G(F_X(G(A))) = G(A)$,
and now from Lemma \ref{lemma-unique-factorization} we conclude
that $F_X(G(A)) =A$.
\end{proof}

Now, let us extend the definition of $F_X$ to all of $\Sph(S_1^n)$:
for $A \in \Sph(S_1^n)$ with polar decomposition $A=U|A|$, we define $F_X(A) = U F_X(|A|)$.
Given $A \in \Sph(X)$ with polar decomposition $A=U|A|$ note that $J(A)=J(|A|)U^*$,
since $\n{ J(|A|)U^* }_{X^*}=1$ and
$$
\tr\big[ J(|A|)U^* A \big] = \tr\big[ J(|A|)|A| \big] = 1.
$$
It then follows that
$$
|J(A)| = \big( J(A)^*J(A) \big)^{1/2} = \big( U J(|A|)J(|A|)U^* \big)^{1/2} = UJ(|A|)U^*
$$
and therefore
$$
G(A) = |J(A)|A = UJ(|A|)U^*U|A| = UJ(|A|)|A| = U G(|A|).
$$
From this last expression, it is easy to check that for any $A \in \Sph(X)$ with polar decomposition $A=U|A|$ we have
$$
F_X(G(A)) = F_X( U G(|A|) ) = U F_X( G(|A|) ) = U|A| = A,
$$
whereas for $A \in \Sph(S_1^n)$ with polar decomposition $A=U|A|$ we have
$$
G(F_X(A)) = G( U F_X(|A|) ) = U G(F_X(|A|)) = U|A| = A.
$$
showing that $F_X :  \Sph(S_1^n)\to \Sph(X)$ and $G : \Sph(X) \to \Sph(S_1^n)$ are inverses.

\begin{remark}
It follows from the arguments above that when $X$ is a unitarily invariant matrix norm on $M_n$ which is uniformly convex and uniformly smooth, then
$F_X^{-1} : \Sph(X) \to \Sph(S_1^n)$ is uniformly continuous with modulus of continuity depending only on the modulus of uniform smoothness of $X$.
However, for the entropy minimization generalized Mazur map $F_X$ itself, we have been unable to bridge the gap between it being uniformly continuous on the positive part of the sphere $\Sph(S_1^n)^+$ (Proposition \ref{prop-FX-unif-coninuous on positive}), and it being uniformly continuous on the entire sphere $\Sph(S_1^n)$.
Note that there is no obvious way to apply the  $2\times 2$ tricks which featured heavily in Section \ref{sec-convexification}: while we could explicitly calculate how the map $G_p$ acts on certain block diagonal and block antidiagonal matrices, it is not clear how the map $F_X$ acts on such matrices.

Note that if the aforementioned gap could be bridged, a proof of Theorem \ref{thm-main} in the style of Odell and Schlumprecht would immediately follow. While the results in this section deal only with the finite-dimensional situation, those finite-dimensional pieces could be glued together in the same way as in \cite{Odell-Schlumprecht} or \cite[Cor. 9.6]{Benyamini-Lindenstrauss}.
\end{remark}

\section{The main result}\label{sec-main}

While we have not been able to obtain a full proof of a noncommutative version of the Odell--Schlumprecht theorem by following the entropy minimization route, the complex interpolation approach of Daher \cite{Daher} will provide the missing link. The key result is the following theorem from \cite{Daher} (also independently obtained by Kalton).

\begin{theorem}\label{thm-Daher}
Let $X_0,X_1$ be an interpolation pair with one of them being uniformly convex. Then for any $\theta, \eta \in (0,1)$, $\Sph(X_\theta)$ and $\Sph(X_\eta)$ are uniformly homeomorphic.
\end{theorem}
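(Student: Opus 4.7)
The plan is to prove this via the method of extremal analytic functions in Calder\'on's complex interpolation. First, I would establish (or cite) the transfer of uniform convexity: if one of $X_0, X_1$ is uniformly convex, then every intermediate space $X_\zeta$ with $\zeta \in (0,1)$ is also uniformly convex, with modulus depending only on that of the endpoint and on $\zeta$. This is a classical result of Cwikel--Reisner (later refined by Daher and Kalton), derivable by combining the three-lines lemma applied to a carefully chosen scalar analytic function with the uniform convexity inequality at the endpoint. With this in hand, I would work in the Calder\'on class $\mathcal{F} = \mathcal{F}(X_0,X_1)$ of bounded analytic functions on the open strip $\{z : 0 < \Re z < 1\}$ with boundary values in $X_0$ and $X_1$, using the isometric identification $X_\zeta \cong \mathcal{F}/\{f \in \mathcal{F} : f(\zeta)=0\}$. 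Uniform convexity of $X_\zeta$ then guarantees that each $x \in \Sph(X_\zeta)$ admits a unique extremal lift $f_{x,\zeta} \in \mathcal{F}$ with $f_{x,\zeta}(\zeta)=x$ and $\|f_{x,\zeta}\|_\mathcal{F}=1$; the assignment $x \mapsto f_{x,\zeta}$ is uniformly continuous by a standard quantitative perturbation-of-minimizers estimate whose modulus is controlled by the modulus of uniform convexity of $X_\zeta$.

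Given $\theta,\eta \in (0,1)$, I would then define $T_{\theta,\eta} : \Sph(X_\theta) \to \Sph(X_\eta)$ by $T_{\theta,\eta}(x) = f_{x,\theta}(\eta)/\|f_{x,\theta}(\eta)\|_{X_\eta}$. Uniform continuity of $T_{\theta,\eta}$ reduces to uniform continuity of $x \mapsto f_{x,\theta}$ composed with evaluation at $\eta$ (which is contractive), provided one has a uniform positive lower bound on $\|f_{x,\theta}(\eta)\|_{X_\eta}$ as $x$ ranges over the sphere. Such a lower bound follows from a Hadamard three-lines argument applied to the scalar analytic function $z \mapsto \langle f_{x,\theta}(z), \phi \rangle$ for a suitably chosen norming functional $\phi \in X_\eta^*$.

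The main obstacle I anticipate is showing that $T_{\theta,\eta}$ admits a uniformly continuous inverse, which should be $T_{\eta,\theta}$. If $y = T_{\theta,\eta}(x)$, then an appropriate scalar multiple of $f_{x,\theta}$ is a $(1+\varepsilon)$-almost extremal lift of $y$ at $\eta$, where $\varepsilon$ is controlled by $1 - \|f_{x,\theta}(\eta)\|_{X_\eta}$ (again via three lines). Uniform convexity of $X_\eta$ then forces the true extremal $f_{y,\eta}$ to be close to this multiple of $f_{x,\theta}$, and by symmetrically iterating this analysis one concludes $T_{\eta,\theta} \circ T_{\theta,\eta} = \mathrm{id}$ together with the required uniform continuity modulus for $T_{\eta,\theta}$. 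This rigidity-via-uniqueness step is the delicate technical heart of the argument and the place where the modulus of uniform convexity enters most essentially.
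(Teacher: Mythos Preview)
The paper does not prove this theorem; it is quoted as a result of Daher (independently Kalton), with \cite[Thm.~9.12]{Benyamini-Lindenstrauss} cited later for the details. Your outline is the skeleton of that argument, but two of your steps do not close as written.

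The three-lines lemma yields \emph{upper} bounds for $|\langle f_{x,\theta}(z),\phi\rangle|$, not lower bounds, and in any case a norming functional $\phi\in\Sph(X_\eta^*)$ gives no control on the boundary values $\langle f_{x,\theta}(it),\phi\rangle$ and $\langle f_{x,\theta}(1+it),\phi\rangle$, so your lower bound on $\|f_{x,\theta}(\eta)\|_{X_\eta}$ is not justified. More seriously, even granting a uniform lower bound $c_0>0$, your almost-extremal comparison only shows that $T_{\eta,\theta}(T_{\theta,\eta}(x))$ lies within a fixed distance of $x$ controlled by $1-c_0$; there is no small parameter to iterate on, so the ``symmetrically iterating'' step produces neither an exact inverse nor a homeomorphism.

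Both gaps are filled by the one fact you are missing, which is the actual heart of Daher's proof: the extremal lift satisfies $\|f_{x,\theta}(\eta)\|_{X_\eta}=1$ for \emph{every} $\eta\in(0,1)$, so that $f_{x,\theta}$ is simultaneously the unique extremal lift of its own value at $\eta$. With this in hand no normalization is needed, $T_{\theta,\eta}(x)=f_{x,\theta}(\eta)$ already lands on the sphere, and $T_{\eta,\theta}\circ T_{\theta,\eta}=\mathrm{id}$ is immediate from uniqueness of extremals. A minor side correction: the uniqueness and uniform continuity of the extremal selection flow from convexity properties of the Calder\'on class $\mathcal F$ itself, not merely of the quotient $X_\zeta$.
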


\begin{remark}
Since the $p$-convexification $E^{(p)}$ can be realized as an interpolation space between $E$ and $\ell_\infty$ \cite{JLS}, Theorem \ref{thm-Daher} and Proposition \ref{prop:interpolation-unitarily-invariant-ideals}
together yield uniform homeomorphisms between the spheres of $E^{(p)}$ and $E^{(q)}$ for any $p,q\in(1,\infty)$ whenever 
$E$ is uniformly convex. Compare with Theorem \ref{thm-homeomorphism-convexification}.
\end{remark}

The following is a noncommutative version of \cite[Cor. 1]{Daher}. For the definitions of $p$-convexity and $q$-concavity, and the associated constants, see \cite[Sec. 1.d]{Lindenstrauss-Tzafriri-II}.

\begin{proposition}\label{prop-pconvex-and-qconcave-give-homeomorphism}
Let $1 < p \le 2 \le q <\infty$. Let $E$ be a $1$-symmetric sequence space which is $p$-convex and $q$-concave.
Then $\Sph(S_E)$ is uniformly homeomorphic to $\Sph(S_2)$.
\end{proposition}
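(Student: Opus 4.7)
The plan is to imitate Daher's proof of his commutative Corollary~1, using Proposition~\ref{prop:interpolation-unitarily-invariant-ideals} to transport the key interpolation identity to the unitarily invariant ideal setting: identify $S_2$ as the complex interpolation midpoint of the pair $(S_E, S_{E'})$ and then invoke Theorem~\ref{thm-Daher}.

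First, the hypotheses that $E$ is $p$-convex with $p>1$ and $q$-concave with $q<\infty$ imply that $E$ is reflexive and uniformly convex (cf.~\cite{Lindenstrauss-Tzafriri-II}); symmetrically $E'$ is $q'$-convex and $p'$-concave with $1<q'\le 2\le p'<\infty$, and is likewise uniformly convex. Consequently both $S_E$ and $S_{E'}$ are uniformly convex (uniform convexity of the underlying $1$-symmetric sequence space is inherited by the corresponding unitarily invariant ideal, cf.~\cite{Arazy}). Moreover, the $q$-concavity with $q<\infty$ ensures that $E$ has order-continuous norm, so Calder\'on's theorem identifies $[E,E']_{1/2}$ with the Calder\'on product $E^{1/2}(E')^{1/2}$; by Lozanovskii's factorization theorem this latter space equals $\ell_2$ isometrically. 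Lifting via Proposition~\ref{prop:interpolation-unitarily-invariant-ideals} then yields $S_2 = [S_E, S_{E'}]_{1/2}$ isometrically.

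Finally, apply Theorem~\ref{thm-Daher} to the interpolation pair $(S_E, S_{E'})$: since $S_E$ is uniformly convex, $\Sph(S_E)$ should be uniformly homeomorphic to $\Sph([S_E, S_{E'}]_{1/2}) = \Sph(S_2)$. The main delicate point is that the statement of Theorem~\ref{thm-Daher} only provides uniform homeomorphisms between spheres of \emph{interior} interpolation spaces, whereas $S_E$ is an endpoint of the pair $(S_E, S_{E'})$. This gap is bridged using the endpoint-inclusive strengthening of Daher's theorem, valid when the relevant endpoint is uniformly convex and implicit in his original argument. If one wishes to work strictly within the interior statement as given, the same conclusion can be obtained by applying Theorem~\ref{thm-Daher} to the reiterated pair $\bigl([S_E, S_{E'}]_\alpha, [S_E, S_{E'}]_{1-\alpha}\bigr)$ for $\alpha\in(0,1/2)$---whose interpolation midpoint is again $S_2$---and then letting $\alpha\to 0^+$ while controlling the moduli of uniform continuity; this last step is precisely where the difficulty of this proof strategy lies, but it is handled by the stability of Daher's uniform homeomorphism construction as the endpoint varies continuously within the family of uniformly convex spaces $\{[S_E,S_{E'}]_\alpha\}_{\alpha\in[0,1/2)}$.
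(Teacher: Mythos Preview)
Your strategy---realize $\ell_2$ as $[E,E']_{1/2}$ via Calder\'on--Lozanovskii, lift to $S_2 = [S_E,S_{E'}]_{1/2}$ by Proposition~\ref{prop:interpolation-unitarily-invariant-ideals}, and invoke Theorem~\ref{thm-Daher}---is natural, and the interpolation identities you state are correct. The difficulty you yourself flag is genuine and is not resolved: Theorem~\ref{thm-Daher} as stated gives uniform homeomorphisms only between spheres of \emph{interior} spaces $X_\theta$, $X_\eta$ with $\theta,\eta\in(0,1)$, and $S_E$ is an endpoint of the pair $(S_E,S_{E'})$. Your two proposed remedies are both assertions rather than arguments. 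The ``endpoint-inclusive strengthening'' is not something you prove, nor is it the statement of Theorem~\ref{thm-Daher}; and the limiting scheme $\alpha\to 0^+$ does not work as written: a family of uniform homeomorphisms $\Sph([S_E,S_{E'}]_\alpha)\to\Sph(S_2)$ for $\alpha>0$, even with uniformly controlled moduli, does not automatically yield a uniform homeomorphism at $\alpha=0$ without a further compactness or convergence argument that you do not supply.

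The paper's proof is engineered precisely to sidestep this endpoint obstruction. After renorming so that the $p$-convexity and $q$-concavity constants are $1$, it invokes Pisier's theorem \cite{Pisier-interpolation-lattices} to write $E=(Y,\ell_2)_\theta$ for some $1$-unconditional $Y$, then sets $Z=(Y,\ell_2)_\eta$ for some $\eta<\theta$. Reiteration gives $E=(Z,\ell_2)_{\theta'}$ and, combined with $(Z,Z^*)_{1/2}=\ell_2$, yields $E=(Z,Z^*)_{\theta'/2}$. Thus \emph{both} $E$ and $\ell_2$ appear as interior points of the single scale $(Z,Z^*)_\bullet$; passing to ideals via Proposition~\ref{prop:interpolation-unitarily-invariant-ideals} and applying Theorem~\ref{thm-Daher} (now legitimately, with $\theta'/2$ and $1/2$ both in $(0,1)$) finishes the argument. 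The introduction of the auxiliary space $Z$ is exactly the device that converts your endpoint problem into an interior one, and it is the missing idea in your attempt.
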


\begin{proof}
After renorming, we may assume that $M^{(p)}(E)=M_{(q)}(E) = 1$ \cite[Prop. 1.d.8]{LT-II}.
While this is not part of the statement, following the proof of \cite[Prop. 1.d.8]{LT-II} we note that since $E$ is $1$-symmetric then so is the renorming (it is obtained through a combination of convexifications, concavifications and passing to duals, all of which will preserve the property of being $1$-symmetric).
Note that this will change the unit sphere of $S_E$, but the old and new spheres will be clearly bi-Lipschitz equivalent so this will not affect our conclusion. It now follows from a theorem of Pisier \cite{Pisier-interpolation-lattices} that there exist $\theta\in(0,1)$ and a $1$-unconditional sequence space $Y$ with $\ell_1 \subseteq Y \subseteq \ell_\infty$ such that $E = (Y,\ell_2)_\theta$.
Let $\eta \in(0, \theta)$.
Since $M^{(2)}(\ell_2) = M^{(1)}(Y) =M_{(2)}(\ell_2) = M_{(\infty)}(Y)=1$, it follows that $Z = (Y,\ell_2)_\eta$ is $\alpha$-convex and $\beta$-concave with constants equal to 1, where $1/\alpha = 1-\eta+\eta/2$ and $1/\beta = \eta/2$ (see \cite[Remark 2.1]{Pisier-interpolation-lattices}).
By reiteration, we can write $E = (Z,\ell_2)_{\theta'}$ for some other $\theta'\in(0,1)$.
Since $(Z,Z^*)_{1/2} = \ell_2$ \cite{Calderon}, once again by reiteration we get $E = (Z,Z^*)_{\theta'/2}$.
It now follows from Proposition \ref{prop:interpolation-unitarily-invariant-ideals} that $S_{E} = (S_Z,S_{Z^*})_{\theta/2}$, and also $S_2 = (S_Z,S_{Z^*})_{1/2}$.
From \cite[Thm. 2]{TJ-uc}, since $Z$ is $\alpha$-convex and $\beta$-concave with constants equal to 1, we get that $S_Z$ is uniformly convex. It then follows from Theorem \ref{thm-Daher} that $\Sph(S_{E})$ and $\Sph(S_2)$ are uniformly homeomorphic. 
\end{proof}

We can now prove the desired result.

\begin{theorem}\label{thm-main}
Let $E$ be a 1-symmetric sequence space.
The following are equivalent:
\begin{enumerate}[(a)]
\item $\Sph(S_E)$ is uniformly homeomorphic to $\Sph(S_1)$.
\item $S_E$ does not contain $\ell_\infty^n$'s uniformly.
\item $E$ does not contain $\ell_\infty^n$'s uniformly.
\end{enumerate}
\end{theorem}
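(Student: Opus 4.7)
The plan is to prove (b) $\Leftrightarrow$ (c) by a standard transfer-of-cotype argument, to obtain (c) $\Rightarrow$ (a) by combining the $p$-convexification machinery of Section \ref{sec-convexification} with Proposition \ref{prop-pconvex-and-qconcave-give-homeomorphism}, and to derive (a) $\Rightarrow$ (b) from the classical nonlinear obstruction showing that the uniform presence of $\ell_\infty^n$'s is incompatible with having a Hilbert-like sphere. Note that the Hilbert-space sphere intended in (a) is $\Sph(S_2)$, which is what the machinery of the paper produces.

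The equivalence (b) $\Leftrightarrow$ (c) should be the cheapest part. Since $E$ embeds isometrically into $S_E$ via diagonal matrices, (b) $\Rightarrow$ (c) is trivial. For the converse, appeal to the classical results of Arazy and of Garling--Tomczak-Jaegermann (cited in the introduction) that $S_E$ inherits the cotype class of $E$; together with the Maurey--Pisier theorem, this identifies ``no $\ell_\infty^n$ uniformly'' in $E$ with the same property in $S_E$.

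For (c) $\Rightarrow$ (a), assume $E$ does not contain $\ell_\infty^n$'s uniformly. Then $E$ is $q$-concave for some $q < \infty$ (Maurey--Pisier, applied in the Banach lattice setting). Fix any $p \ge 3$ and consider the $p$-convexification $E^{(p)}$. It remains $1$-symmetric, is $p$-convex with constant $1$, and is $pq$-concave, so Proposition \ref{prop-pconvex-and-qconcave-give-homeomorphism} yields a uniform homeomorphism $\Sph(S_{E^{(p)}}) \to \Sph(S_2)$. Theorem \ref{thm-homeomorphism-convexification} gives a uniform homeomorphism $G_p : \Sph(S_{E^{(p)}}) \to \Sph(S_E)$. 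Composing $G_p^{-1}$ with the map from Proposition \ref{prop-pconvex-and-qconcave-give-homeomorphism} produces the required uniform homeomorphism between $\Sph(S_E)$ and the Hilbert sphere.

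The hard part, and the only nontrivial implication remaining, is (a) $\Rightarrow$ (b). The plan is to invoke the classical fact that if $\Sph(X)$ is uniformly homeomorphic to the sphere of a uniformly convex space, then $X$ cannot contain $\ell_\infty^n$'s uniformly. This is proved by an ultrapower/Gorelik-principle argument: uniform homeomorphism between spheres passes to ultrapowers and propagates uniform convexity in a weakened asymptotic form that is incompatible with the uniform presence of $\ell_\infty^n$'s. Applied with $X = S_E$ and the Hilbert space $S_2$ on the other side, this yields (b). This step is the main obstacle because it is the only one that cannot be handled by the constructive Mazur-type maps built in this paper; everything else is a direct assembly of Theorem \ref{thm-homeomorphism-convexification}, Proposition \ref{prop-pconvex-and-qconcave-give-homeomorphism}, and classical lattice/cotype transfer results.
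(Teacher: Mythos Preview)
Your proof follows essentially the same route as the paper's: (b)$\Leftrightarrow$(c) via cotype transfer (Garling--Tomczak-Jaegermann plus Maurey--Pisier), (c)$\Rightarrow$(a) by $p$-convexifying (the paper fixes $p=3$) and then invoking Proposition~\ref{prop-pconvex-and-qconcave-give-homeomorphism} together with Theorem~\ref{thm-homeomorphism-convexification}, and (a)$\Rightarrow$(b) by a classical nonlinear obstruction. Two small corrections are worth noting. First, statement~(a) literally involves $\Sph(S_1)$, not $\Sph(S_2)$; these are uniformly homeomorphic via the noncommutative Mazur map, and the paper uses this bridge explicitly, so your construction in (c)$\Rightarrow$(a) needs one extra composition. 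Second, you have the difficulty backwards: (a)$\Rightarrow$(b) is not the hard part and does not require an ultrapower or Gorelik-type argument---the paper dispatches it in one line by citing Enflo's classical result that a space whose sphere is uniformly homeomorphic to a Hilbert sphere cannot contain $\ell_\infty^n$'s uniformly.
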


\begin{proof}
The equivalence between (b) and (c) is well-known \cite{Garling-TJ}, as not containing $\ell_\infty^n$'s uniformly is equivalent to having finite cotype by the Maurey--Pisier theorem .

Suppose that (a) holds. 
It is also well-known that $\Sph(S_1)$ is uniformly homeomorphic to $\Sph(S_2)$ (see \cite[Thm. 4.1]{Ozawa} or \cite{Ricard} for proofs), so the same argument as in \cite{Odell-Schlumprecht} applies: it follows from a result of Enflo \cite{Enflo} that in this case $S_E$  cannot contain $\ell_\infty^n$'s uniformly, yielding (b).

Now suppose that (c) holds.
Our argument follows \cite{Daher}. 
Once again appealing to the Maurey--Pisier theorem, $E$ has cotype $q'$ for some $q'<\infty$,
and therefore $E$ is $q$-concave for any $q>q'$ \cite[Cor. 1.f.9]{LT-II}.
The $3$-convexification $E^{(3)}$ of $E$ will then be $3$-convex and $3q$-concave.
By Proposition \ref{prop-pconvex-and-qconcave-give-homeomorphism} $\Sph(S_{E^{(3)}})$ is uniformly homeomorphic to $\Sph(S_2)$, and by Theorem \ref{thm-homeomorphism-convexification} $\Sph(S_E)$ is uniformly homeomorphic to $\Sph(S_{E^{(3)}})$.
\end{proof}

\begin{remark}
As far as we know
Theorem \ref{thm-main} provides new examples of spaces whose unit sphere is uniformly homeomorphic to the unit sphere  of a Hilbert space beyond the ones that were already known, e.g. the ones mentioned in \cite[Sec. 3.2]{Mimura}  or
\cite[Chap. 9]{Benyamini-Lindenstrauss}.
Also, it increases the family of known spaces for which the finite cotype condition is enough (recall that for general spaces, finite cotype is not enough \cite[Example 9.23]{Benyamini-Lindenstrauss}).
\end{remark}

A refinement of the condition of having unit sphere uniformly homeomeomorphic to that of a Hilbert space, known as Property (H), is noteworthy in the $K$-Theory of Operator Algebras. Before stating the definition, recall that a \emph{paving} of a separable Banach space is an increasing sequence of finite-dimensional subspaces whose union is dense in the whole space.

\begin{definition}
A Banach space $X$ has \emph{Property (H)} if there exist pavings $(X_n)_{n=1}^\infty$ of $X$ and $(Y_n)_{n=1}^\infty$ of a separable Hilbert space $H$, and a uniform homeomorphism $f : \Sph(X) \to \Sph(H)$, such that for each $n\in\N$ the restriction of $f$ to $\Sph(X_n)$ is a (uniform) homeomorphism onto $\Sph(Y_n)$. 
\end{definition}

Kasparov and Yu
\cite{Kasparov-Yu} proved the strong Novikov conjecture for groups coarsely embeddable into Banach spaces satisfying this property. 
See also \cite{MR3325537} for a  geometric analogue of this result: any discrete metric space with bounded geometry which coarsely embeds into a Banach space with Property (H) satisfies the coarse Novikov conjecture.

By \cite[Thm. 6.2]{Cheng-Wang}, the sequence spaces covered in the classical Odell--Schlumprecht theorem have Property (H).
We now show that the same conclusion holds for our noncommutative version.

\begin{theorem}
Let $E$ be a 1-symmetric sequence space which does not contain $\ell_\infty^n$'s uniformly. Then $S_E$ has Property (H).
\end{theorem}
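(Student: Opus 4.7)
The plan is to use the natural pavings coming from the compressions $A \mapsto P_n A P_n$ of Proposition \ref{prop-approximations-in-S_E}. Set $X_n = S_E^n$ and $Y_n = M_n \subseteq S_2$, where we think of $M_n$ as the subspace of operators on $\ell_2$ supported on the first $n$ basis vectors. By Proposition \ref{prop-approximations-in-S_E} these are pavings of $S_E$ and $S_2$ respectively, and analogously $S_{E^{(3)}}^n$ paves $S_{E^{(3)}}$. The aim is to verify that the uniform homeomorphism $f : \Sph(S_E) \to \Sph(S_2)$ constructed in Theorem \ref{thm-main}---namely $f = \phi \circ G_3^{-1}$, where $G_3$ is the $3$-convexification Mazur map from Theorem \ref{thm-homeomorphism-convexification} and $\phi$ is the Daher-style homeomorphism produced in Proposition \ref{prop-pconvex-and-qconcave-give-homeomorphism}---restricts to a homeomorphism $\Sph(X_n) \to \Sph(Y_n)$ for each $n$.

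For the convexification factor, $G_3(x) = u|x|^3$ is computed pointwise via polar decomposition and continuous functional calculus, both of which preserve the subspace of operators supported on the first $n$ basis vectors. Hence $G_3$ restricts to a continuous bijection $\Sph(S_{E^{(3)}}^n) \to \Sph(S_E^n)$, which is a homeomorphism by compactness of the finite-dimensional sphere; the same holds for $G_3^{-1}$.

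The main obstacle is showing that Daher's map $\phi$ also respects the paving. Recall that in the proof of Proposition \ref{prop-pconvex-and-qconcave-give-homeomorphism} one writes $S_{E^{(3)}} = (S_Z, S_{Z^*})_{\theta'/2}$ and $S_2 = (S_Z, S_{Z^*})_{1/2}$, and invokes Theorem \ref{thm-Daher} on this interpolation pair. In the standard construction, $\phi(x)$ is obtained by choosing (via an extremal condition tied to uniform convexity) an analytic function $F$ on the strip with $F(\theta'/2) = x$ and evaluating at $1/2$. The key observation is that the compression $A \mapsto P_n A P_n$ is a contraction on each of $S_Z$ and $S_{Z^*}$ (directly from the ideal property, since $\n{P_n}_\infty = 1$), and therefore a contraction on every complex interpolation space between them by Proposition \ref{prop:interpolation-unitarily-invariant-ideals}. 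Consequently, if $x \in M_n$ then $z \mapsto P_n F(z) P_n$ is another admissible analytic extension with boundary values of no larger norm; the extremal characterization of $\phi$ then forces the selected extension, and hence $\phi(x)$, to lie in $M_n$. This yields $\phi(\Sph(S_{E^{(3)}}^n)) \subseteq \Sph(S_2^n)$, and the reverse inclusion follows by the same argument applied to $\phi^{-1}$.

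Composing the two factors, $f$ restricts to a homeomorphism $\Sph(X_n) \to \Sph(Y_n)$ for each $n$, establishing Property (H) for $S_E$. This is the noncommutative analogue of the Cheng--Wang argument \cite[Thm. 6.2]{Cheng-Wang} for the classical Odell--Schlumprecht theorem, with the needed compression-compatibility supplied by Proposition \ref{prop:interpolation-unitarily-invariant-ideals} and the ideal property of unitarily invariant norms.
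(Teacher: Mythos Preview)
Your argument is correct and follows the same route as the paper's proof: track the composite homeomorphism from Theorem~\ref{thm-main} through the natural pavings $(S_E^n)$ and $(S_2^n)$. The paper presents this in three steps rather than two, explicitly separating out the renorming of $E^{(3)}$ (to make the convexity/concavity constants equal to $1$) that occurs at the start of Proposition~\ref{prop-pconvex-and-qconcave-give-homeomorphism}; you have folded this into your map $\phi$. Since that renorming is again $1$-symmetric and the sphere-to-sphere map it induces is just $x \mapsto x/\n{x}_{\text{new}}$, it trivially preserves $M_n$, so your bundling causes no harm---but it is worth making explicit.

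Where you actually add something is in the Daher step: the paper simply points the reader to the proof of \cite[Thm.~9.12]{Benyamini-Lindenstrauss} and asserts that ``a careful look'' shows the map sends $\Sph(S_F^n)$ onto $\Sph(S_2^n)$. Your compression argument---$P_nF(\cdot)P_n$ is again an admissible extremal function because compression is contractive on both endpoints, and uniqueness of the extremal (from uniform convexity of $S_Z$) forces $F=P_nFP_n$---is exactly what that careful look reveals, so you have supplied the missing detail rather than taken a different path.
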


\begin{proof}
The proof of Theorem \ref{thm-main} provides a uniform homeomorphism $f : \Sph(S_E) \to \Sph(S_2)$.
We just need to check that the Property (H) condition is satisfied for $f$ with respect to the canonical pavings $(S_E^n)_{n=1}^\infty$ and $(S_2^n)_{n=1}^\infty$.

The uniform homeomorphism $f$ was constructed in three steps, so we will track what happens to $\Sph(S_E^n)$ at each step. The first step is given by the map $G_3^{-1}$, which maps $\Sph(S_E^n)$ onto $\Sph(S^n_{E^{(3)}})$.
The second step (contained in the proof of Proposition \ref{prop-pconvex-and-qconcave-give-homeomorphism}) is induced by a renorming of $E^{(3)}$ which yields another $1$-symmetric sequence space $F$. Clearly this second step maps $\Sph(S^n_{E^{(3)}})$ onto $\Sph(S^n_F)$.
The third step is given by the complex interpolation argument appearing in the proof of Proposition \ref{prop-pconvex-and-qconcave-give-homeomorphism}, which in turn depends on Theorem \ref{thm-Daher}.
A careful look at the proof of the latter theorem, as presented in \cite[Thm. 9.12]{Benyamini-Lindenstrauss} shows that the third step maps $\Sph(S^n_F)$ onto $\Sph(S^n_2)$, finishing the proof.
\end{proof}

\bibliography{references}

\def\cprime{$'$} \def\cprime{$'$}
\providecommand{\bysame}{\leavevmode\hbox to3em{\hrulefill}\thinspace}
\providecommand{\MR}{\relax\ifhmode\unskip\space\fi MR }
\providecommand{\MRhref}[2]{%
  \href{http://www.ams.org/mathscinet-getitem?mr=#1}{#2}
}
\providecommand{\href}[2]{#2}
\begin{thebibliography}{CWY15}

\bibitem[Ara81]{Arazy}
Jonathan Arazy, \emph{On the geometry of the unit ball of unitary matrix
  spaces}, Integral Equations Operator Theory \textbf{4} (1981), no.~2,
  151--171. \MR{606129}

\bibitem[Bha97]{Bhatia}
Rajendra Bhatia, \emph{Matrix analysis}, Graduate Texts in Mathematics, vol.
  169, Springer-Verlag, New York, 1997. \MR{1477662}

\bibitem[Bha07]{PDM}
\bysame, \emph{Positive definite matrices}, Princeton Series in Applied
  Mathematics, Princeton University Press, Princeton, NJ, 2007. \MR{2284176}

\bibitem[BL00]{Benyamini-Lindenstrauss}
Yoav Benyamini and Joram Lindenstrauss, \emph{Geometric nonlinear functional
  analysis. {V}ol. 1}, American Mathematical Society Colloquium Publications,
  vol.~48, American Mathematical Society, Providence, RI, 2000. \MR{1727673}

\bibitem[Cal64]{Calderon}
A.-P. Calder\'{o}n, \emph{Intermediate spaces and interpolation, the complex
  method}, Studia Math. \textbf{24} (1964), 113--190. \MR{167830}

\bibitem[Car10]{Carlen}
Eric Carlen, \emph{Trace inequalities and quantum entropy: an introductory
  course}, Entropy and the quantum, Contemp. Math., vol. 529, Amer. Math. Soc.,
  Providence, RI, 2010, pp.~73--140. \MR{2681769}

\bibitem[CK17]{Kaminska-Czerwinska}
Ma\l{}gorzata Czerwi\'{n}ska and Anna Kami\'{n}ska, \emph{Geometric properties
  of noncommutative symmetric spaces of measurable operators and unitary matrix
  ideals}, Comment. Math. \textbf{57} (2017), no.~1, 45--122. \MR{3703594}

\bibitem[CW18]{Cheng-Wang}
Qingjin Cheng and Qin Wang, \emph{On {B}anach spaces with {K}asparov and {Y}u's
  {P}roperty ({H})}, J. Math. Anal. Appl. \textbf{457} (2018), no.~1, 200--213.
  \MR{3702702}

\bibitem[CWY15]{MR3325537}
Xiaoman Chen, Qin Wang, and Guoliang Yu, \emph{The coarse {N}ovikov conjecture
  and {B}anach spaces with {P}roperty ({H})}, J. Funct. Anal. \textbf{268}
  (2015), no.~9, 2754--2786. \MR{3325537}

\bibitem[Dah95]{Daher}
Mohamad Daher, \emph{Hom\'{e}omorphismes uniformes entre les sph\`eres
  unit\'{e} des espaces d'interpolation}, Canad. Math. Bull. \textbf{38}
  (1995), no.~3, 286--294. \MR{1347300}

\bibitem[Enf69]{Enflo}
Per Enflo, \emph{On a problem of {S}mirnov}, Ark. Mat. \textbf{8} (1969),
  107--109. \MR{0415576}

\bibitem[GK69]{Gohberg-Krein}
I.~C. Gohberg and M.~G. Kre\u{\i}n, \emph{Introduction to the theory of linear
  nonselfadjoint operators}, Translated from the Russian by A. Feinstein.
  Translations of Mathematical Monographs, Vol. 18, American Mathematical
  Society, Providence, R.I., 1969. \MR{0246142}

\bibitem[GTJ83]{Garling-TJ}
D.~J.~H. Garling and N.~Tomczak-Jaegermann, \emph{The cotype and uniform
  convexity of unitary ideals}, Israel J. Math. \textbf{45} (1983), no.~2-3,
  175--197. \MR{719118}

\bibitem[JLS96]{JLS}
W.~B. Johnson, J.~Lindenstrauss, and G.~Schechtman, \emph{Banach spaces
  determined by their uniform structures}, Geom. Funct. Anal. \textbf{6}
  (1996), no.~3, 430--470. \MR{1392325}

\bibitem[Joc97]{Jocic-norm-ineqs}
Danko~R. Joci\'{c}, \emph{Norm inequalities for self-adjoint derivations}, J.
  Funct. Anal. \textbf{145} (1997), no.~1, 24--34. \MR{1442157}

\bibitem[Kad66]{Kadets}
M.~\u{I}. Kadec\cprime, \emph{Topological equivalence of all separable {B}anach
  spaces}, Dokl. Akad. Nauk SSSR \textbf{167} (1966), 23--25. \MR{0201951}

\bibitem[KY12]{Kasparov-Yu}
Gennadi Kasparov and Guoliang Yu, \emph{The {N}ovikov conjecture and geometry
  of {B}anach spaces}, Geom. Topol. \textbf{16} (2012), no.~3, 1859--1880.
  \MR{2980001}

\bibitem[LT77]{Lindenstrauss-Tzafriri-I}
Joram Lindenstrauss and Lior Tzafriri, \emph{Classical {B}anach spaces. {I}},
  Ergebnisse der Mathematik und ihrer Grenzgebiete, Band 92, Springer-Verlag,
  Berlin-New York, 1977, Sequence spaces. \MR{0500056}

\bibitem[LT79a]{Lindenstrauss-Tzafriri-II}
\bysame, \emph{Classical {B}anach spaces. {II}}, Ergebnisse der Mathematik und
  ihrer Grenzgebiete [Results in Mathematics and Related Areas], vol.~97,
  Springer-Verlag, Berlin-New York, 1979, Function spaces. \MR{540367}

\bibitem[LT79b]{LT-II}
\bysame, \emph{Classical {B}anach spaces. {II}}, Ergebnisse der Mathematik und
  ihrer Grenzgebiete [Results in Mathematics and Related Areas], vol.~97,
  Springer-Verlag, Berlin-New York, 1979, Function spaces. \MR{540367}

\bibitem[Maz29]{Mazur}
Stanis{\l}aw Mazur, \emph{Une remarque sur l'hom{\'e}omorphie des champs
  fonctionnels}, Studia Mathematica \textbf{1} (1929), no.~1, 83--85.

\bibitem[Mim15]{Mimura}
Masato Mimura, \emph{Sphere equivalence, {B}anach expanders, and
  extrapolation}, Int. Math. Res. Not. IMRN (2015), no.~12, 4372--4391.
  \MR{3356758}

\bibitem[OS94]{Odell-Schlumprecht}
Edward Odell and Thomas Schlumprecht, \emph{The distortion problem}, Acta Math.
  \textbf{173} (1994), no.~2, 259--281. \MR{1301394}

\bibitem[Oza04]{Ozawa}
Narutaka Ozawa, \emph{A note on non-amenability of {${\mathscr{B}}(l_p)$} for
  {$p=1,2$}}, Internat. J. Math. \textbf{15} (2004), no.~6, 557--565.
  \MR{2078880}

\bibitem[Pis79]{Pisier-interpolation-lattices}
G.~Pisier, \emph{Some applications of the complex interpolation method to
  {B}anach lattices}, J. Analyse Math. \textbf{35} (1979), 264--281.
  \MR{555306}

\bibitem[Ray02]{Raynaud}
Yves Raynaud, \emph{On ultrapowers of non commutative {$L_p$} spaces}, J.
  Operator Theory \textbf{48} (2002), no.~1, 41--68. \MR{1926043}

\bibitem[Ric15]{Ricard}
\'{E}ric Ricard, \emph{H\"{o}lder estimates for the noncommutative {M}azur
  maps}, Arch. Math. (Basel) \textbf{104} (2015), no.~1, 37--45. \MR{3299148}

\bibitem[Sim05]{Simon}
Barry Simon, \emph{Trace ideals and their applications}, second ed.,
  Mathematical Surveys and Monographs, vol. 120, American Mathematical Society,
  Providence, RI, 2005. \MR{2154153}

\bibitem[TJ84]{TJ-uc}
Nicole Tomczak-Jaegermann, \emph{Uniform convexity of unitary ideals}, Israel
  J. Math. \textbf{48} (1984), no.~2-3, 249--254. \MR{770705}

\bibitem[Wil17]{Wilde}
Mark~M. Wilde, \emph{Quantum information theory}, second ed., Cambridge
  University Press, Cambridge, 2017. \MR{3645110}

\end{thebibliography}
\bibliographystyle{amsalpha}

\end{document}